\def\eqref#1{equation~\ref{#1}}
\def\ceil#1{\lceil #1 \rceil}
\def\1{\bm{1}}
\DeclareMathAlphabet{\mathsfit}{\encodingdefault}{\sfdefault}{m}{sl}
\SetMathAlphabet{\mathsfit}{bold}{\encodingdefault}{\sfdefault}{bx}{n}
\DeclareMathOperator*{\argmax}{arg\,max}
\DeclareMathOperator*{\argmin}{arg\,min}
\title{Accelerating Sinkhorn algorithm with sparse Newton iterations
}
\author{Xun Tang\footnotemark[2] , Michael Shavlovsky\footnotemark[3] , Holakou Rahmanian\footnotemark[3] , Elisa Tardini\footnotemark[3] ,\\
\bf{Kiran Koshy Thekumparampil\footnotemark[3] , Tesi Xiao\footnotemark[3] , Lexing Ying\footnotemark[2]}\\
\footnotemark[2] Stanford University\\
\footnotemark[3] Amazon\\
\texttt{\{xutang,lexing\}@stanford.edu}\\
\texttt{\{shavlov, holakou, ettardin, kkt, tesixiao\}@amazon.com} 
}
\newtheorem{prop}{Proposition}
\newtheorem{thm}{Theorem}
\newtheorem*{thm*}{Theorem}
\newtheorem{defn}{Definition}
\newtcolorbox{thmbox}{colback=cyan!5,colframe=white}
\newtcolorbox{questionbox}{colback=red!5!white,colframe=white}
\date{August 2023}
\begin{document}

\maketitle
\begin{abstract}
    Computing the optimal transport distance between statistical distributions is a fundamental task in machine learning. One remarkable recent advancement is entropic regularization and the Sinkhorn algorithm, which utilizes only matrix scaling and guarantees an approximated solution with near-linear runtime. Despite the success of the Sinkhorn algorithm, its runtime may still be slow due to the potentially large number of iterations needed for convergence. To achieve possibly super-exponential convergence, we present Sinkhorn-Newton-Sparse (SNS), an extension to the Sinkhorn algorithm, by introducing early stopping for the matrix scaling steps and a second stage featuring a Newton-type subroutine.
    Adopting the variational viewpoint that the Sinkhorn algorithm maximizes a concave Lyapunov potential, we offer the insight that the Hessian matrix of the potential function is approximately sparse. Sparsification of the Hessian results in a fast \(O(n^2)\) per-iteration complexity, the same as the Sinkhorn algorithm. 
    In terms of total iteration count, we observe that the SNS algorithm converges orders of magnitude faster across a wide range of practical cases, including optimal transportation between empirical distributions and calculating the Wasserstein \(W_1, W_2\) distance of discretized densities. The empirical performance is corroborated by a rigorous bound on the approximate sparsity of the Hessian matrix.
\end{abstract}
\section{Introduction}
Optimal transport (OT) calculates the best transportation plan from an ensemble of sources to targets \citep{villani2009optimal, linial1998deterministic,peyre2017computational} and is becoming increasingly an important task in machine learning \citep{sandler2011nonnegative,jitkrittum2016interpretable,arjovsky2017wasserstein,salimans2018improving,genevay2018learning,chen2020graph,fatras2021unbalanced}. In this work, we focus on optimal transportation problem with entropic regularization:
\begin{equation}\label{eqn: LP+entropy}
\min_{P: P\1=r , P^{\top}\1 = c}  C\cdot P + \frac{1}{\eta} H(P),
\end{equation}
where \(\eta > 0\) is the entropy regularization parameter, \(C \in \mathbb{R}^{n \times n}\) is the cost matrix, \(c,r \in \mathbb{R}^{n}\) are respectively the source and target density, and \(H(P) := \sum_{ij}p_{ij}\log{p_{ij}}\) is the entropy of \(P\). The insight of using the Sinkhorn algorithm is that entropy-regularized optimal transport is equivalent to an instance of matrix scaling \citep{linial1998deterministic,cuturi2013sinkhorn,garg2020operator}:
\[
\text{Find diagonal matrix \(X, Y\) so that \(P = X\exp(-\eta C)Y\) satisfies \(P\1=r , P^{\top}\1 = c\).}\footnote{The symbol \(\exp(-\eta C)\) denotes entry-wise exponential.}
\]

The Sinkhorn algorithm \citep{yule1912methods} alternates between scaling the rows and columns of a matrix to a target vector, and its convergence property was first proved in \citet{sinkhorn1964relationship}. Theoretical results show that the Sinkhorn algorithm converges at a relatively slow rate. 
While the Sinkhorn algorithm satisfies exponential convergence \citep{franklin1989scaling,carlier2022linear}, its best proven exponential convergence rate is often too close to one for practical use (see Section \ref{sec: related} for detailed discussions) and in practice it behaves more like a polynomially converging method \citep{altschuler2017near,ghosal2022convergence}. Therefore, to reach within a sub-optimality gap \(\epsilon\), the tightest bound of iteration complexity in practice tends to be \(O(\mathrm{poly}(\epsilon^{-1}))\) in early stages. 

We introduce a new algorithm that greatly accelerates convergence by reducing the required iteration counts. Utilizing a well-established variational perspective for the Sinkhorn algorithm \citep{altschuler2017near}, we consider the Lyapunov potential \(f \colon \mathbb{R}^{n} \times \mathbb{R}^{n} \to \mathbb{R}\) associated with the entropic optimal transport problem in \eqref{eqn: LP+entropy}:
\begin{equation}\label{eqn: dual form}
f(x,y) :=  -\frac{1}{\eta}\sum_{ij}\exp(\eta(-c_{ij} + x_i + y_j) -1) + \sum_i r_ix_i + \sum_j c_j y_j.
\end{equation}
In particular, the discussion in Section \ref{sec: variational} shows that solving \eqref{eqn: LP+entropy} is equivalent to obtaining \((x^{\star}, y^{\star}) = \argmax_{x, y}f(x, y)\). We emphasize that \(f\) is \emph{concave}, allowing routine convex optimization techniques to be used. 
Under this framework, the matrix scaling step in the Sinkhorn algorithm can be seen as an alternating maximization algorithm
\begin{equation}\label{eqn: Sinkhorn}
    \begin{cases}
    x \gets \argmax_{x} f(x, y),\\
    y \gets \argmax_{y} f(x, y).
    \end{cases}
\end{equation}
The formula in \eqref{eqn: Sinkhorn} provides surprisingly clear guidance and justification for our approach to accelerate the Sinkhorn algorithm. First, one can only achieve a substantially reduced iteration count by jointly optimizing \((x,y)\). Second, no first-order method can be used, as they achieve polynomial convergence at best, reaching an iteration complexity of \(O(\epsilon^{-1})\) in the case of gradient descent for a sub-optimality of \(\epsilon\) \citep{boyd2004convex}, or \(O(\epsilon^{-1/2})\) in the case of accelerated gradient descent \citep{nesterov1983method}. In conclusion, one can only hope to achieve better convergence with second-order methods, which enjoy super-exponential convergence \citep{boyd2004convex}. The use of Newton's method for the Sinkhorn algorithm has been introduced in \citet{brauer2017sinkhorn}. However, even a single Newton step has an \(O(n^3)\) cost, which violates the goal of having a near-linear time algorithm with \(O(n^2)\) total complexity. This naturally leads to the question:
\begin{questionbox}
\begin{center}
        Is there an algorithm that reaches the optimality gap \(\epsilon\) with an iteration complexity of Newton's algorithm and an \(O(n^2)\) per-iteration complexity of the Sinkhorn algorithm?
\end{center}
\end{questionbox}

\paragraph{Practical Newton's algorithm via Hessian sparsification}
We answer the question in the affirmative by introducing the Sinkhorn-Newton-Sparse (SNS) algorithm, which achieves fast convergence and an \(O(n^{2})\) per-iteration complexity. 
The main technical novelty is to solve the Hessian system \((\nabla^2f)v = -\nabla f\) in Newton's algorithm efficiently with sparse approximation. In particular, we show that the Hessian matrix of the Lyapunov potential is approximately sparse in the following sense:
\begin{defn}\label{defn: approximate sparsity}
(Sparsity and approximate sparsity)
    Let \(\lVert \cdot \rVert_{0}\) denote the \(l_0\) norm. The sparsity of a matrix \(M \in \mathbb{R}^{m\times n}\) is defined by \(\tau(M) := \frac{\lVert M\rVert_{0}}{mn}.\) Furthermore, a matrix \(M \in \mathbb{R}^{m \times n}\) is \((\lambda, \epsilon)\)-sparse if there exists a matrix \(\tilde{M}\) so that \(\tau(\tilde{M}) \leq \lambda\) and \(\lVert M - \tilde{M} \rVert_{1} \leq \epsilon\). 
\end{defn}
In Section \ref{sec: analysis}, we prove a rigorous bound on the approximate sparsity of the Hessian matrix. We highlight our result by providing an informal version of our sparsity analysis:
\begin{thmbox}
\begin{thm*}\label{thm: informal} (Informal version of Theorem \ref{thm: main})
    Assume \(\min_{P: P\1=r , P^{\top}\1 = c}  C\cdot P\) admits a unique solution. Then, if \(t, \eta\) are sufficiently large, the Hessian matrix after \(t\) Sinkhorn matrix scaling step is \(( \frac{3}{2n}, 12n^2\exp{\left(-p\eta\right)} + \frac{q}{\sqrt{t}})\)-sparse for some parameter \(p, q\).
\end{thm*}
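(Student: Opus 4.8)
The plan is to isolate the two mechanisms that keep $\nabla^2 f$ from being exactly sparse — the finite regularization $\eta$, which measures how far the entropic plan sits from a vertex of the transportation polytope, and the finite iteration count $t$, which measures how far the Sinkhorn iterate sits from the entropic optimum — and to bound each contribution separately. I would begin by writing the Hessian explicitly: with $P_{ij} := \exp(\eta(-c_{ij}+x_i+y_j)-1)$,
\[
\nabla^2 f(x,y) \;=\; -\eta\begin{pmatrix}\mathrm{diag}(P\1) & P\\ P^{\top} & \mathrm{diag}(P^{\top}\1)\end{pmatrix},
\]
so that all off-diagonal mass of the $2n\times 2n$ matrix lives in the blocks $P$ and $P^{\top}$, while the two diagonal blocks are literally diagonal and contribute only $2n$ nonzeros regardless of $(x,y)$. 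Hence approximate sparsity of the Hessian at $(x^{(t)},y^{(t)})$ reduces to approximate sparsity of the transport plan $P^{(t)}$ it induces, and the natural comparison matrix $\tilde H$ is $\nabla^2 f(x^{(t)},y^{(t)})$ with the blocks $P^{(t)},(P^{(t)})^{\top}$ truncated to $S:=\mathrm{supp}(P^{\star})$, the support of the (assumed unique) unregularized optimal plan. Being the unique optimum of a linear program over the transportation polytope, $P^{\star}$ is a vertex, i.e.\ a spanning forest of $K_{n,n}$ with $|S|\le 2n-1$; counting entries gives $\tau(\tilde H)\le \tfrac{2(2n-1)+2n}{(2n)^2}<\tfrac{3}{2n}$, the first parameter in the statement.

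It then remains to bound $\lVert \nabla^2 f(x^{(t)},y^{(t)}) - \tilde H\rVert_1$, which up to the scalar $\eta$ equals $2\sum_{(i,j)\notin S}P^{(t)}_{ij}$; I would split this through the entropic optimum $P^{\star}_{\eta}=\argmax f$ by the triangle inequality. For the $\eta$-contribution, $\sum_{(i,j)\notin S}P^{\star}_{\eta,ij}$: pick Kantorovich potentials $(\varphi,\psi)$, $\varphi_i+\psi_j\le c_{ij}$, that are \emph{strictly} complementary, so $\{(i,j):\varphi_i+\psi_j=c_{ij}\}=S$ (this is Goldman--Tucker for linear programs, and primal uniqueness makes the choice consistent), and set $\delta:=\min_{(i,j)\notin S}(c_{ij}-\varphi_i-\psi_j)>0$. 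The entropic dual maximizers $(x^{\star}_{\eta},y^{\star}_{\eta})$ converge to $(\varphi,\psi)$ as $\eta\to\infty$, so for large $\eta$ and $(i,j)\notin S$ one has $x^{\star}_{\eta,i}+y^{\star}_{\eta,j}-c_{ij}\le -\delta/2$, hence $P^{\star}_{\eta,ij}\le \exp(-\eta\delta/2)$ and the sum is at most $n^2\exp(-\eta\delta/2)$; this furnishes the $12n^2\exp(-p\eta)$ term with $p$ of order $\delta$, the crude constant also absorbing the $\eta$ prefactor and the $P$-versus-$P^{\top}$ factor. For the $t$-contribution I would use the variational viewpoint: (\ref{eqn: Sinkhorn}) is exact block coordinate ascent on the smooth concave $f$; after one full step the iterates lie in a fixed compact set on which $f$ is gradient-Lipschitz, so the classical rate gives $f(x^{\star}_\eta,y^{\star}_\eta)-f(x^{(t)},y^{(t)})=O(1/t)$. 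On that set $f$ is strongly concave modulo its one-dimensional null direction $(\1,-\1)$ — the relevant quadratic form is $(u,v)\mapsto\sum_{ij}P_{ij}(u_i+v_j)^2$, positive definite on $\{(\1,-\1)\}^{\perp}$ because every iterate, like $P^{\star}_\eta$, has fully supported and hence connected bipartite support — with a modulus $\mu=\mu(\eta)>0$ that can be taken independent of $t$. Therefore $\mathrm{dist}\big((x^{(t)},y^{(t)}),(x^{\star}_\eta,y^{\star}_\eta)\big)^2=O\big(1/(\mu t)\big)$ in the quotient by the null direction, and since $P$ is Lipschitz in $(x,y)$ on the compact set and depends only on that quotient, $\lVert P^{(t)}-P^{\star}_\eta\rVert_1=O(1/\sqrt t)$, the $q/\sqrt t$ term. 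Adding the two estimates through the triangle inequality completes the argument.

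I expect the $t$-contribution to be the real obstacle, and within it two points need genuine care. First, that the bare uniqueness hypothesis yields a \emph{strictly positive} gap $\delta$: this is precisely strict complementarity of the transportation linear program, so Goldman--Tucker (or a direct argument on the polytope) must be invoked rather than uniqueness alone. Second, that the strong-convexity-modulo-null-space modulus $\mu(\eta)$ is uniform in $t$: this requires first pinning the Sinkhorn iterates inside an explicit compact region — e.g.\ via an a priori $\ell_\infty$ bound on the dual iterates valid after the first scaling step — and then lower-bounding, uniformly over that region, the smallest nonzero eigenvalue of the matrix of the quadratic form $(u,v)\mapsto\sum_{ij}P^{(t)}_{ij}(u_i+v_j)^2$, ideally in terms of the tree $S$ rather than the full support so the bound does not silently degrade with $\eta$. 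By contrast, the $O(1/t)$ coordinate-ascent rate and the convergence of the entropic potentials to Kantorovich potentials are essentially citations, and turning them into the explicit $p,q$ is routine bookkeeping.
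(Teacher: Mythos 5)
Your overall skeleton is the same as the paper's: reduce Hessian sparsity to sparsity of the induced plan, take the reference sparse matrix to be supported on the (at most $2n-1$)-entry support of the unique vertex solution to get the $\tfrac{3}{2n}$ count, and split the $\ell_1$ error by the triangle inequality through the entropic optimum $P^{\star}_{\eta}$. Where you genuinely diverge is in how the two error terms are established. For the $\eta$-term the paper simply invokes Corollary 9 of Weed (2018), which bounds $\mathrm{dist}(\mathcal{F},P^{\star}_{\eta})$ by $2n^2\exp(-\eta\Delta+1)$ with $\Delta$ the \emph{vertex optimality gap}; you instead rebuild an exponential bound from strict complementarity of the Kantorovich potentials (Goldman--Tucker), with the rate governed by the dual slack $\delta$ rather than $\Delta$. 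That route works but carries a real obligation you only partly flag: you need the \emph{limit} of the entropic dual potentials to be a strictly complementary dual solution, which does not follow from primal uniqueness plus the mere existence of such a pair — it requires the known fact that the entropic central path converges to the relative interior of the dual optimal face. For the $t$-term the paper cites Theorem 4.4 of Ghosal--Nutz together with Pinsker's inequality to get $\lVert P_{t,\eta}-P^{\star}_{\eta}\rVert_1\le\sqrt{q/t}$ directly; you instead go through the $O(1/t)$ alternating-maximization rate on $f$ plus strong concavity modulo the null direction $(\1,-\1)$. This is a self-contained and more optimization-flavored argument, but it is the more fragile leg: you must pin the iterates in a compact set and produce a $t$-uniform modulus $\mu(\eta)$ for the quadratic form $\sum_{ij}P_{ij}(u_i+v_j)^2$ on $\{(\1,-\1)\}^{\perp}$, and restricting to the tree $S$ only helps once the iterates' entries on $S$ are bounded below, which itself presupposes some convergence; the KL/Pinsker route sidesteps all of this. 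Both of your substitutions are repairable and would yield the stated $(\tfrac{3}{2n},\,12n^2\exp(-p\eta)+q/\sqrt{t})$ form with problem-dependent $p,q$; the paper's version buys brevity and uniformity of constants by outsourcing exactly the two steps you identified as delicate. (One small point in your favor: you correctly write the Hessian with a minus sign and track the $\eta$ prefactor in $\lVert\nabla^2 f-\tilde{M}\rVert_1$, which the paper's Equation~\ref{eqn: Hessian formula} and proof gloss over.)
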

\end{thmbox}
As \(\nabla^{2}f\) is approximately sparse, one can approximate \(\nabla^{2}f\) with a relaxed target sparsity \(\lambda = O(1/n) > \frac{3}{2n}\). The cost for solving the sparsified linear system thus reduces to \(O(\lambda n^3 ) = O(n^2)\).


\paragraph{Contributions}
The contribution of this paper is threefold. First, we point out the sparsification technique, and the resultant SNS algorithm reduces the per-iteration cost of Newton step to \(O(n^2)\). While a related idea has been discussed with the name of the Sinkhorn-Newton algorithm in \citet{brauer2017sinkhorn}, the \(O(n^3)\) complexity of Sinkhorn-Newton makes the algorithm prohibitively expensive to work with. Second, we give a non-asymptotic analysis, showing that one can expect sparse Hessian generically. Third, we fully adapt our sparsity argument to the case of non-unique solutions. We introduce a novel argument that shows an \(O(\frac{1}{\sqrt{n}})\) sparsity. Moreover, the provided numerical analysis directly treats the case of non-unique solutions.

\paragraph{Notation}
For \(n \in \mathbb{N}\), we denote \([n] := \{1, \ldots, n\}\). We use shorthand for several matrix operations for the sake of notational compactness. The \(\cdot\) operation between matrices is defined by \(C \cdot P = \sum_{i,j=1}^{n}c_{ij}p_{ij}\). For a matrix \(M\), the notation \(\log{M}\) stands for entry-wise logarithm, and similarly \(\exp(M)\) denotes entry-wise exponential. The symbol \(\mathbf{1}\) stands for the all-one vector in \(\mathbb{R}^n\). Finally, we use the symbol \(\lVert M\rVert_{1}\) to denote the entry-wise \(l_1\) norm, i.e. \(\lVert M\rVert_{1} := \lVert \mathrm{vec}(M)\rVert = \sum_{ij} \lvert m_{ij} \rvert \).

\section{Related literature}\label{sec: related}

\paragraph{Convergence of Sinkhorn} We give an overview of the convergence of the Sinkhorn algorithm and discuss the iteration complexity to reach within an error tolerance \(\alpha\) on the marginal KL divergence, which is defined by \(\mathcal{L}(P):= \mathrm{KL}(r || P\1) + \mathrm{KL}(c || P^{\top}\1)\).
The Sinkhorn algorithm has a sub-optimality gap of \(O\left(\left(1 - e^{-24\lVert C \rVert_{\infty}\eta}\right)^t\right)\) after \(t\) steps \citep{carlier2022linear}, which implies an iteration complexity of \(O(e^{24\lVert C \rVert_{\infty}\eta} \log(1/\alpha))\).
The analysis by \citet{altschuler2017near} proves an iteration complexity of \(O(\alpha^{-1})\), and it has been refined to \(O(\alpha^{-1/2})\) in special cases \citep{ghosal2022convergence}. We refer the readers to \citet{peyre2017computational,carlier2022linear} for a more comprehensive review of the convergence of the Sinkhorn algorithm. The analysis in \citep{mallasto2020entropy,weed2018explicit} considers the convergence of the entropic optimal transport solution to the true transport plan, which further connects the Sinkhorn solution to the ground-truth optimal transport plan.

\paragraph{Sparsification in Sinkhorn} Sparsification techniques have been extensively explored to improve the efficiency of the Sinkhorn algorithm. Sparsification has been considered during matrix scaling \citep{li2023importance}, and a 2-norm or group lasso penalty function has been considered to boost sparsity in the transport plan \citep{blondel2018smooth}. Unlike the methods discussed, which modify the optimization task, our work retains the original formulation while introducing enhancement via sparsification inside the optimization step. Our approach uses Newton's algorithm, and the sparsity is applied to the Hessian matrix for the Lyapunov function. The numerical analysis in our work lends further support to a sparse transportation plan in the entropic case.

\paragraph{Acceleration for Sinkhorn} There is considerable research interest in speeding up the runtime of the Sinkhorn algorithm. There are a few strategies related to our work, including randomized or greedy row/column scaling \citep{genevay2016stochastic, altschuler2017near}, dynamic schedule of entropic regularization \citep{chen2023exponential}. These works complement our work, as both strategies can be naturally used in our Newton-type algorithm. Although both techniques can potentially boost the convergence speed of SNS, we focus on the case with fixed entropy strength and joint optimization over all variables to show that a sparse approximation to the Hessian can reach rapid convergence on its own. Another notable line of acceleration technique considers acceleration using an approximation of the kernel \(K = \exp(-C\eta)\) \citep{deriche1993recursively,solomon2015convolutional,bonneel2016wasserstein, altschuler2019massively,scetbon2020linear,huguet2023geodesic} or by exploring low-rankness of the transport plan\citep{scetbon2021low}.

\paragraph{Variational methods in Entropic OT} Furthermore, a stream of research focuses on jointly optimizing the potential \(f\) using accelerated first-order method \citep{dvurechensky2018computational,thibault2021overrelaxed,kemertas2023efficient}. The SNS algorithm can be extended by replacing Sinkhorn matrix scaling steps with iteration schemes based on first-order methods. As a variational interpretation of the Sinkhorn algorithm, it can also be viewed as mirror descent \citep{mishchenko2019sinkhorn,leger2021gradient}.

\paragraph{OT in Machine Learning} A rich body of literature exists on the applications of optimal transport in various machine-learning domains. Research such as \citet{kolouri2017optimal,vayer2018optimal,genevay2019sample,luise2018differential,oneto2020exploiting,huynh2020otlda} studies statistical learning with optimal transport. Studies like \citet{genevay2017gan,bousquet2017optimal,sanjabi2018convergence,deshpande2019max,lei2019geometric,patrini2020sinkhorn,onken2021ot} utilize optimal transport distance as a metric for improving the performance and robustness of unsupervised learning algorithms. The Sinkhorn algorithm, featured in works such as \citet{Fernando_2013_ICCV,redko2017theoretical,courty2017joint,alvarez2018structured,nguyen2022improving,nguyen2021most,xu2022unsupervised,turrisi2022multi}, is commonly used in robust domain adaptation, particularly in addressing distribution shift. Additionally, various approaches employ neural networks to learn the optimal transport map, as seen in \citep{seguy2017large,damodaran2018deepjdot,makkuva2020optimal,mokrov2021large,daniels2021score}.

\section{Variational form of the Sinkhorn algorithm}\label{sec: variational}
This section summarizes the variational form of the Sinkhorn algorithm, a mathematical representation crucial for understanding our proposed algorithm's theoretical underpinnings. As pointed out, the Sinkhorn algorithm performs alternating maximization for the Lyapunov potential. By introducing the Lagrangian variable and using the minimax theorem (for a detailed derivation, see Appendix \ref{sec: primal versus primal dual}), we formulate the associated primal-dual problem to \eqref{eqn: LP+entropy} as:
\[
\max_{x,y} \min_P L(P, x, y) :=  \frac{1}{\eta} P \cdot \log P + C\cdot P - x\cdot(P\1-r) - y\cdot(P^{\top}\1-c).
\]

The Lyapunov function \(f\), as described in \eqref{eqn: dual form}, comes from eliminating \(P\) (see Appendix \ref{sec: primal versus primal dual}):
\[
f(x,y) = \min_P L(P, x, y).
\]
Maximizing over \(f\) is equivalent to solving the problem defined in \eqref{eqn: LP+entropy}:
As a consequence of the minimax theorem, \((x^{\star}, y^{\star}) = \argmax_{x, y}f(x, y)\) effectively solves \eqref{eqn: LP+entropy}, as the following equation shows: \[\argmin_{P: P\1=r , P^{\top}\1 = c}  C\cdot P + \frac{1}{\eta} H(P)=:P^{\star}  =  \exp{\left(\eta(-C + x^{\star}\1^{\top} + \1\left(y^{\star}\right)^{\top}) - 1\right)}.\]

Let \(P\) be defined as  \(\exp{\left(\eta(-C + x\1^{\top} + \1y^{\top}) - 1\right)}\), where it serves the intermediate matrix corresponding to dual variables \(x,y\). As \(f\) is concave, the first-order condition is equivalent to optimality. Upon direct calculation, one has \[\partial_{x_i} f(x,y) = r_{i} - \sum_{k}P_{ik},\quad \partial_{y_j} f(x,y) = c_{j} - \sum_{k}P_{kj}.\]
As a consequence, maximizing \(x\) with fixed \(y\) corresponds to scaling the rows of \(P\) so that \(P\mathbf{1} = r\). Likewise, maximizing \(y\) with fixed \(x\) corresponds to scaling the column of matrix \(P\) so that \(P^{\top}\mathbf{1} = c\). Thus, the Sinkhorn matrix scaling algorithm corresponds to an alternating coordinate ascent approach to the Lyapunov function, as illustrated in \eqref{eqn: Sinkhorn}. For the reader's convenience, we write down the first and second derivatives of the Lyapunov function \(f\):
\begin{equation}\label{eqn: Hessian formula}
\nabla_{x} f(x,y) = r - P \mathbf{1}, \quad \nabla_{y} f(x,y) = c - P^{\top} \mathbf{1}, \quad \nabla^2 f(x, y) = \eta
\begin{bmatrix}
\mathrm{diag}(P\1) & P \\
P^{\top} & \mathrm{diag}(P^{\top}\1)
\end{bmatrix}
\end{equation}




\section{Main algorithm}\label{sec: main alg}
We introduce Algorithm \ref{alg:SNS}, herein referred to as Sinkhorn-Newton-Sparse (SNS). This algorithm extends the Sinkhorn algorithm with a second stage featuring a Newton-type subroutine. In short, Algorithm \ref{alg:SNS} starts with running the Sinkhorn algorithm for \(N_1\) steps, then switches to a sparsified Newton algorithm for fast convergence. 
Algorithm \ref{alg:SNS} employs a straightforward thresholding rule for the sparsification step. Specifically, any entry in the Hessian matrix smaller than a constant \(\rho\) is truncated to zero, and the resulting sparsified matrix is stored in a sparse data structure.
The truncation procedure preserves symmetry and diagonal dominance of the Hessian matrix (as a simple consequence of \eqref{eqn: Hessian formula}), which justifies the use of conjugate gradient for linear system solving \citep{golub2013matrix}. The obtained search direction \(\Delta z\) is an approximation to the exact Newton step search direction, i.e., the solution to the linear system \((\nabla^2 f)v = -\nabla f\). In other words, removing sparse approximation will recover the Newton algorithm, and the Newton algorithm without sparsification is considered in Appendix \ref{sec: Sinkhorn-Newton ablation}. In Appendix \ref{sec: augmented Lyapunov function}, we introduce additional techniques used for improving the numerical stability of SNS.

\begin{algorithm}
\caption{Sinkhorn-Newton-Sparse (SNS)}\label{alg:SNS}
\begin{algorithmic}[1]
\Require $f, x_{\mathrm{init}} \in \mathbb{R}^{n}, y_{\mathrm{init}} \in \mathbb{R}^{n}, N_1, N_2, \rho, i = 0$

\State \texttt{\# Sinkhorn stage}\\
$(x,y) \gets (x_{\mathrm{init}}, y_{\mathrm{init}}) $ \Comment{Initialize dual variable}

\While{$i < N_1$} 
    \State \(P \gets \exp{\left(\eta(-C + x\1^{\top} + \1y^{\top}) - 1\right)}\)
    \State \( x \gets x + \left(\log(r) - \log(P\mathbf{1} ) \right)/\eta\) 
    \State \(P \gets \exp{\left(\eta(-C + x\1^{\top} + \1y^{\top}) - 1\right)}\)
    \State \( y \gets y + \left(\log(c) - \log(P^{\top}\mathbf{1} ) \right)/\eta\)
    \State $i \gets i + 1$
\EndWhile

\State \texttt{\# Newton stage} \\

$z \gets (x,y) $ 

\While{$i <N_1 + N_2$} 
    \State \(M \gets \text{Sparsify}(\nabla^2 f(z), \rho)\)\Comment{Truncate with threshold \(\rho\)}
    \State $\Delta z \gets \text{Conjugate\_Gradient}(M, -\nabla f(z))$ \Comment{Solve sparse linear system}
    \State $\alpha \gets \text{Line\_search}(f, z, \Delta z) $ \Comment{Line search for step size}
    \State $z \gets z + \alpha  \Delta z$
    \State $i \gets i + 1$
\EndWhile
\State Output dual variables $(x,y) \gets z$.
\end{algorithmic}
\end{algorithm}


\paragraph{Complexity analysis of Algorithm \ref{alg:SNS}}
Inside the Newton stage, the conjugate gradient method involves \(O(n)\) left multiplications by \(M := \mathrm{Sparsify}(\nabla^2 f(z), \rho)\). Furthermore, left multiplication by \(M\) involves \(O(\tau(M) n^2)\) arithmetic operations, where \(\tau(\cdot)\) is the sparsity defined in Definition \ref{defn: approximate sparsity}. Thus, obtaining \(\Delta z\) is of complexity \(O(\tau(M)n^{3})\). To maintain an upper bound for per-iteration complexity, in practice, one sets a target sparsity \(\lambda\) and picks \(\rho\) dynamically to be the \(\ceil{\lambda n^2}\)-largest entry of \(\nabla^2 f(z)\), which ensures \(\tau(M) \leq \lambda\). For the forthcoming numerical experiments, whenever the transport problem has unique solutions, we have found setting \(\lambda = 2/n\) suffices for a convergence performance quite indistinguishable from a full Newton step, which is corroborated by the approximate sparsity results mentioned in Theorem \ref{thm: informal}.


\paragraph{Necessity of Sinkhorn steps}
We remark that the Sinkhorn stage in SNS has two purposes. The first purpose is warm initialization. Using the Sinkhorn steps, we bring the intermediate dual variable \((x,y)\) closer to the optimizer so that the subsequent Newton step has a good convergence speed. Secondly, this proximity ensures that the intermediate matrix \(P\) satisfies approximate sparsity. While the number of Sinkhorn steps is written as a fixed parameter in Algorithm \ref{alg:SNS}, one can alternatively switch to the Newton stage dynamically. In particular, we switch to a sparsified Newton step when the two following conditions hold: First, the intermediate matrix \(P\) should admit a good sparse approximation, and secondly, the Newton step should improve the Lyapunov potential more than the Sinkhorn algorithm. 
Our analysis in Section \ref{sec: analysis} demonstrates that it requires at most \(O(1/\epsilon^2)\) Sinkhorn steps for the sparsification to be within an error of \(\epsilon\).

\section{Sparsity analysis of SNS}\label{sec: analysis}
This section gives a complete theoretical analysis of the approximate sparsity throughout the SNS algorithm: Theorem \ref{thm: main} analyzes the approximate sparsity of the Hessian after the \(N_1\) Sinkhorn step; Theorem \ref{thm: subsidiary} analyzes the approximate sparsity within the \(N_2\) Newton steps and proves monotonic improvement on the approximate sparsity guarantee. Three symbols are heavily referenced throughout this analysis, which we list out for the reader's convenience:
\begin{itemize}
    \item \(P_{t, \eta}:\) The result of Sinkhorn algorithm after \(t\) iterations.
    \item \(\mathcal{F}:\) The set of optimal transport plan in the original problem
    \item \(P^{\star}_{\eta}:\) The entropy-regularized optimal solution,
\end{itemize}
where \(\mathcal{F}, P^{\star}_{\eta}\) satisfy the following equation:
\begin{equation}\label{eqn: LP + sinkhorn}
\mathcal{F} := \argmin_{P: P\1=r , P^{\top}\1 = c}  C\cdot P, \quad P^{\star}_{\eta} := \argmin_{P: P\1=r , P^{\top}\1 = c}  C\cdot P + \frac{1}{\eta}H(P).
\end{equation}

\paragraph{Approximate sparsity in the Sinkhorn stage}
We first prove the main theorem on the approximate sparsity of \(P_{t, \eta}\), which in particular gives an approximate sparsity bound for the Hessian matrix when one initiates the sparsified Newton step in Algorithm \ref{alg:SNS}. We generalize to allow for the setting where potentially multiple solutions exist to the optimal transport problem. Definition \ref{defn: vertex and face} lists the main concepts in the analysis. For concepts such as a polyhedron, face, and vertex solution, the readers can consult \citet{cook1998combinatorial} for detailed definitions.
\begin{defn}\label{defn: vertex and face}
    Define \(\mathcal{P}\) as the feasible set polyhedron, i.e. \(\mathcal{P} := \{P \mid P\1=r , P^{\top}\1 = c, P \geq 0\}\). The symbol \(\mathcal{V}\) denotes the set of vertices of \(\mathcal{P}\). The symbol  \(\mathcal{O}\) stands for the set of optimal vertex solution, i.e.
    \begin{equation}\label{eqn: LP}
        \mathcal{O} := \argmin_{P \in \mathcal{V}} C\cdot P.
    \end{equation}
    The symbol \(\Delta\) denotes the vertex optimality gap \[\Delta = \min_{Q \in \mathcal{V} - \mathcal{O}}Q \cdot C - \min_{P \in \mathcal{O}}P \cdot C.\]
    We use \(\mathcal{F} = \mathrm{Conv}(\mathcal{O})\) to denote the optimal face to the optimal transport problem, and \(\tau(\mathcal{F}):=\max_{M \in \mathcal{F}}\tau(M)\) is defined to be the sparsity of \(\mathcal{F}\). We define a distance function to \(\mathcal{F}\) by \(\mathrm{dist}(\mathcal{F}, P) = \argmin_{M \in \mathcal{F}}\lVert M - P \rVert_{1}\).
\end{defn}
We move on to prove the theorem for approximate sparsity of \(P_{t, \eta}\):
\begin{thm}\label{thm: main}
    Assume \(\lVert r \rVert_{1} = \lVert c \rVert_{1} = 1\), and let \(\Delta\) be as in Definition \ref{defn: vertex and face}. There exists constant \(q, t_1\) such that, for \(\eta > \frac{1 + 2\log{n}}{\Delta},\) \(t > t_{1}\), one has
    \[\mathrm{dist}(\mathcal{F}, P_{t, \eta}) \leq 6n^2 \exp\left(-\eta\Delta\right) + \frac{\sqrt{q}}{\sqrt{t}}.\]
    Therefore, for \(\lambda^{\star} = \tau(\mathcal{F})\), it follows that \(P_{t, \eta}\) is \((\lambda^{\star}, \epsilon_{t, \eta})\)-sparse, whereby
    \[
    \epsilon_{t, \eta} := 6n^2 \exp\left(-\eta\Delta\right) + \frac{\sqrt{q}}{\sqrt{t}}.
    \]
    As a result, the Hessian matrix in Algorithm \ref{alg:SNS} after \(N_{1}\) Sinkhorn steps is \((\frac{\lambda^{\star}}{2} + \frac{1}{2n}, 2\epsilon_{N_1, \eta})\)-sparse.
    
    Define the subset \(\mathcal{S} \subset \mathbb{R}^{n \times n}\) so that \(C \in \mathcal{S} \) if \(C\) is a cost matrix for which \(\min_{P: P\1=r , P^{\top}\1 = c}  C\cdot P\) has non-unique solutions. Then, \(\mathcal{S}\) is of Lebesgue measure zero, and so the optimal transport \(\min_{P: P\1=r, P^{\top}\1 = c}  C\cdot P\) has unique solution generically. The generic condition \(C \not \in \mathcal{S}\) leads to \(\lambda^{\star} \leq 2/n.\) If one further assumes $r =  c = \frac{1}{n}\mathbf{1}$, then \(\lambda^{\star} = 1/n.\) 
    
\end{thm}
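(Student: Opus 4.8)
The plan is to establish the distance estimate first and then read off the three consequences (approximate sparsity of $P_{t,\eta}$, approximate sparsity of the Hessian, and the structural facts for generic $C$). For the distance estimate I would split with the triangle inequality
\[
\mathrm{dist}(\mathcal{F}, P_{t,\eta}) \;\le\; \lVert P_{t,\eta} - P^{\star}_{\eta}\rVert_{1} \;+\; \mathrm{dist}(\mathcal{F}, P^{\star}_{\eta}),
\]
so that the first term is a pure convergence-of-Sinkhorn statement and the second a pure entropic-regularization-path statement; each contributes one of the two summands in $\epsilon_{t,\eta}$.

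For the first term, after $t$ matrix-scaling rounds the Lyapunov suboptimality $f(x^{\star},y^{\star})-f(x_{t},y_{t})$ is $O(1/t)$ (the variational restatement of the $O(1/\alpha)$ marginal--KL rate of \citet{altschuler2017near}, via Pinsker). From the formula for $\nabla^2 f$ in \eqref{eqn: Hessian formula} one checks that $\nabla^{2}f(x,y)=\eta\,(\text{weighted bipartite ``Laplacian''})$ is negative semidefinite with null space exactly $\mathrm{span}\{(\mathbf{1},-\mathbf{1})\}$ whenever the intermediate $P$ has strictly positive entries (which it does, being an entrywise exponential), and that transverse to this one flat direction the curvature is bounded below once $(x_{t},y_{t})$ lies in a neighbourhood of the maximizer. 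Since $(x,y)\mapsto P$ is locally Lipschitz there, combining the $O(1/t)$ suboptimality with this local strong concavity yields $\lVert P_{t,\eta}-P^{\star}_{\eta}\rVert_{1}\le \sqrt{q}/\sqrt{t}$ for $t>t_{1}$, with $q,t_{1}$ depending only on $\eta,n,\lVert C\rVert_{\infty}$ and the marginals. I would isolate this as a lemma and keep the constants implicit.

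The second term is the crux. Here I would use the Boltzmann form $(P^{\star}_{\eta})_{ij}=\exp(\eta(-c_{ij}+x^{\star}_{i}+y^{\star}_{j})-1)$ of the entropic optimum together with LP complementary slackness: for an optimal dual pair $(\bar x,\bar y)$ one has $\mathcal{F}=\{P\in\mathcal{P}: p_{ij}=0 \text{ whenever } \bar c_{ij}:=c_{ij}-\bar x_{i}-\bar y_{j}>0\}$, and every such ``forbidden'' position carries a strictly positive reduced cost bounded below in terms of $\Delta$ (this is where the vertex optimality gap enters and where the hypothesis $\eta\Delta>1+2\log n$ and the constant $6n^{2}$ are calibrated). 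Comparing $P^{\star}_{\eta}$, which minimizes $C\cdot P+\tfrac1\eta H(P)$, against the maximum-entropy point of $\mathcal{F}$ pins down the potentials up to the harmless additive shift, after which I would bound $\sum_{(i,j)\ \text{forbidden}}(P^{\star}_{\eta})_{ij}=O(n^{2}e^{-\eta\Delta})$; finally a polyhedral argument --- any feasible $P$ putting total mass $\delta$ on the forbidden positions is within $O(\delta)$ in $\ell_{1}$ of $\mathcal{F}$, since zeroing those entries and re-balancing the marginals keeps the point inside $\mathcal{P}$ --- upgrades this to $\mathrm{dist}(\mathcal{F},P^{\star}_{\eta})\le 6n^{2}e^{-\eta\Delta}$. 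This is essentially the entropic-penalty-in-LP estimate of \citet{weed2018explicit} specialized to the transport polytope; extracting the exponential-in-$\eta$ rate (rather than the $1/\eta$ rate a naive entropy-boundedness argument gives) and tracking the $n$-dependence down to $6n^{2}$ is the main obstacle.

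The consequences then follow cheaply. Choosing $\tilde M\in\mathcal{F}$ attaining $\mathrm{dist}(\mathcal{F},P_{t,\eta})$ gives $\tau(\tilde M)\le\tau(\mathcal{F})=\lambda^{\star}$ and $\lVert P_{t,\eta}-\tilde M\rVert_{1}\le\epsilon_{t,\eta}$, i.e.\ $(\lambda^{\star},\epsilon_{t,\eta})$-sparsity. For the Hessian, by \eqref{eqn: Hessian formula} the only ``dense'' content of the $2n\times 2n$ matrix $\nabla^{2}f$ is the two copies $P,P^{\top}$ of $P_{N_{1},\eta}$, so replacing them by $\tilde M,\tilde M^{\top}$ while keeping the two diagonal blocks (each with only $n$ nonzeros) exact yields an approximant with at most $2n+2\lambda^{\star}n^{2}$ of $4n^{2}$ entries nonzero --- sparsity $\tfrac{\lambda^{\star}}{2}+\tfrac{1}{2n}$ --- and $\ell_{1}$ error $2\lVert P_{N_{1},\eta}-\tilde M\rVert_{1}\le 2\epsilon_{N_{1},\eta}$. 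Finally, if $\argmin_{P\in\mathcal{P}}C\cdot P$ is non-unique its optimal face contains an edge $[v_{1},v_{2}]$ of $\mathcal{P}$, forcing $C\cdot(v_{1}-v_{2})=0$; as $\mathcal{P}$ has finitely many edges, $\mathcal{S}$ lies inside a finite union of hyperplanes in $\mathbb{R}^{n\times n}$ and is Lebesgue-null. When $C\notin\mathcal{S}$, $\mathcal{F}$ is a single vertex of $\mathcal{P}$, whose support is a forest in the bipartite graph $K_{n,n}$ and hence has at most $2n-1$ edges, so $\lambda^{\star}=\tau(\mathcal{F})\le(2n-1)/n^{2}<2/n$; and if moreover $r=c=\tfrac1n\mathbf{1}$ the vertices of $\mathcal{P}$ are the scaled permutation matrices $\tfrac1n\Pi$, each with exactly $n$ nonzeros, giving $\lambda^{\star}=n/n^{2}=1/n$.
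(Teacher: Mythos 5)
Your proposal is correct and follows essentially the same route as the paper: the identical triangle-inequality split of $\mathrm{dist}(\mathcal{F},P_{t,\eta})$ into a Sinkhorn-convergence term and an entropic-bias term, the identical block construction $\tilde M$ for the Hessian with the $2n+2\lambda^{\star}n^{2}$ nonzero count, and the identical vertex/forest argument giving $\lambda^{\star}\le 2/n$ generically and $=1/n$ for uniform marginals. The only substantive difference is that the paper obtains the two quantitative ingredients by citation --- Corollary 9 of \citet{weed2018explicit} for the $6n^{2}e^{-\eta\Delta}$ bound and Theorem 4.4 of \citet{ghosal2022convergence} together with Pinsker's inequality for the $\sqrt{q}/\sqrt{t}$ bound --- whereas you sketch self-contained derivations of both (via local strong concavity of $f$ transverse to the degenerate direction, and via complementary slackness and reduced costs, respectively) and correctly flag the $e^{-\eta\Delta}$ estimate as the part requiring real work.
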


\begin{proof}
    It suffices to construct a sparse approximation to \(P_{t, \eta}\) that matches the requirement in Definition \ref{defn: approximate sparsity}. We choose the sparse approximation to be the matrix \(P^{\star} \in F\) which satisfies \(P^{\star} = \argmin_{M\in F} \lVert P_{\eta}^{\star} - M \rVert_1\). By triangle inequality, one has
    \[
    \lVert P_{t, \eta} - P^{\star} \rVert \leq \epsilon := \lVert P_{t, \eta} - P^{\star}_{\eta} \rVert_{1} + \mathrm{dist}(\mathcal{F}, P^{\star}_{\eta}).
    \]
    Thus \(P_{t, \eta}\) is \((\tau(\mathcal{F}), \epsilon)\)-sparse. Existence of a sparse approximation to the Hessian matrix \(\nabla^{2}f\) is shown by the following construction:
    \[
    \tilde{M} = \begin{bmatrix}
    \mathrm{diag}(P_{t, \eta}\1) & P^{\star} \\
\left(P^{\star}\right)^{\top} & \mathrm{diag}(P_{t, \eta}^{\top}\1)
    \end{bmatrix}.
    \]
    One can directly count that the number of nonzero entries of \(\tilde{M}\) is upper bounded by \(2n + 2\lambda^{\star}n^2\). Moreover, direct computation shows \(\lVert\tilde{M} - \nabla^{2}f\rVert_1 = 2  \lVert P_{t, \eta} - P^{\star}\rVert_1 \leq 2\epsilon\). Thus, we show that the Hessian matrix is \((\frac{\lambda^{\star}}{2} + \frac{1}{2n}, 2\epsilon)\)-sparse.

    Thus, the proof reduces to proving \(\epsilon < \epsilon_{t, \eta}\).
    By Corollary 9 in \citep{weed2018explicit}, if \(\eta > \frac{1 + 2\log{n}}{\Delta},\) it follows
    \begin{equation}\label{eqn: weed result}
        \mathrm{dist}(\mathcal{F}, P_{\eta}^{\star}) \leq 2n^2\exp\left(-\eta\Delta  + 1 \right) \leq 6n^2 \exp\left(-\eta\Delta\right).
    \end{equation}
    By Pinsker inequality \citep{pinsker1964information} and Theorem 4.4 in \citet{ghosal2022convergence}, there exists constants \(q, t_1\) such that, for any \(t > t_1\), one has
    \begin{equation}\label{eqn: nutz result}
        \lVert P_{t, \eta} - P^{\star}_{\eta} \rVert_{1}^2 \leq \mathrm{KL}(P_{\eta, t} || P_{\eta}^{\star}) + \mathrm{KL}( P_{\eta}^{\star} || P_{\eta, t}  ) \leq \frac{q}{t},
    \end{equation}
    and thus \(\lVert P_{t, \eta} - P^{\star}_{\eta} \rVert_{1} \leq \frac{\sqrt{q}}{\sqrt{t}}.\) Combining \eqref{eqn: weed result} and \eqref{eqn: nutz result}, one has \(\epsilon < \epsilon_{t, \eta}\), as desired.
    
    One has \(C \in \mathcal{S} \in \mathbb{R}^{n \times n}\) if and only if there exist two distinct permutation matrices \(P_1, P_2\) so that \(C \cdot P_1 = C \cdot P_2\). For each \(P_1, P_2\), the condition that \(C \cdot P_1 = C \cdot P_2\) is on a subset of measure zero on \(\mathbb{R}^{n \times n}\). As there are only a finite number of choices for \(P_1, P_2\), it follows that \(\mathcal{S}\) is a finite union of sets of measure zero, and thus in particular \(\mathcal{S}\) is of measure zero. Thus, one has \(C \not \in \mathcal{S}\) generically.
    
    When \(\min_{P: P\1=r , P^{\top}\1 = c}  C\cdot P\) has a unique solution \(\mathcal{P}^{\star}\), it must be an extremal point of the polyhedron \(\mathcal{P}\), which has \(2n-1\) non-zero entries \citep{peyre2017computational}, and therefore \(\tau(\mathcal{F}) = \tau(\mathcal{P}^{\star}) \leq \frac{2n-1}{n^2} \leq \frac{2}{n}\). In the case where $r =  c = \frac{1}{n}\mathbf{1}$, it follows from Birkhoff–von Neumann theorem that \(nP^{\star}\) is a permutation matrix, and therefore \(\tau(\mathcal{F}) = \tau(\mathcal{P}^{\star}) = \frac{1}{n}\).
\end{proof}

As Theorem \ref{thm: main} shows, taking a target sparsity \(\lambda = O(1/n) > 3n/2\) in Algorithm \ref{alg:SNS} leads to a good sparse approximation, which leads to a \(O(n^2)\) per-iteration cost. It is worth pointing out that the \(\exp{\left(-\Delta\eta\right)}\) term in \(\epsilon_{t, \eta}\) shows the appealing property that the matrix \(P_{t, \eta}\) has a better sparse approximation guarantee in the challenging case where \(\eta\) is very large. 

\paragraph{Approximate sparsity in the Newton stage}
We consider next the approximate sparsity inside the \(N_2\) Newton loops. We show that approximate sparsity is monotonically improving as the Newton step converges to the optimal solution:
\begin{thm}\label{thm: subsidiary}
    Let \(z_{k} = (x_{k}, y_{k})\) denote the dual variable at the $k$-th Newton step, and let \(\epsilon_k = \max_{z}f(z) -  f(z_k)\) be the sub-optimality gap for \(z_k\). For the sake of normalizing the transport plan formed by \(z_k\), take \(y_{k,\star} = \argmax_{y} f(x_{k}, y)\) and define \(P_{k}\) to be the transport plan formed after column normalization, i.e., \[P_{k} = \exp{\left(\eta(-C + x_{k}\1^{\top} + \1\left(y_{k,\star}\right)^{\top}) - 1\right)}.\]
    Then, for the same constant \(q\) in Theorem \ref{thm: main}, for \(\epsilon_k < 1\) and \(\eta >  \frac{1 + 2\log{n}}{\Delta}\), the matrix \(P_{k}\) is \((\lambda^{\star}, \zeta_k)\)-sparse, where
    \[
    \zeta_{k} = 6n^2 \exp\left(-\eta\Delta\right) + \sqrt{q} \left(\epsilon_k\right)^{1/4}.
    \]
\end{thm}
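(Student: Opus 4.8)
The plan is to rerun the proof of Theorem~\ref{thm: main} almost verbatim, with the Newton sub-optimality $\epsilon_k$ playing the role of $t^{-1}$. As there, it suffices to exhibit one matrix $\tilde M$ with $\tau(\tilde M)\le\lambda^\star$ and $\lVert P_k-\tilde M\rVert_1\le\zeta_k$, and I take $\tilde M=P^\star:=\argmin_{M\in\mathcal{F}}\lVert P^\star_\eta-M\rVert_1\in\mathcal{F}$, so that $\tau(P^\star)\le\tau(\mathcal{F})=\lambda^\star$ and, by the triangle inequality,
\[
\lVert P_k-P^\star\rVert_1\;\le\;\lVert P_k-P^\star_\eta\rVert_1+\mathrm{dist}(\mathcal{F},P^\star_\eta).
\]
Since $\eta>\tfrac{1+2\log n}{\Delta}$, Corollary~9 of \citet{weed2018explicit} bounds the second term by $6n^2\exp(-\eta\Delta)$, exactly as in \eqref{eqn: weed result}. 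So the whole theorem reduces to showing $\lVert P_k-P^\star_\eta\rVert_1\le\sqrt q\,\epsilon_k^{1/4}$.

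Toward that bound I would first pass from $z_k$ to $z_k'=(x_k,y_{k,\star})$. Since $f(z_k')=\max_y f(x_k,y)\ge f(x_k,y_k)=f(z_k)$, the point $z_k'$ has sub-optimality gap at most $\epsilon_k$, and the optimality condition $\nabla_y f(z_k')=c-P_k^{\top}\1=0$ says that $P_k$ carries the exact column marginal; hence $\lVert P_k\rVert_1=\lVert c\rVert_1=1$ and $\nabla f(z_k')=(r-P_k\1,\,0)$. In other words $P_k$ is in exactly the posture of a Sinkhorn iterate — one marginal exact, the other off by the residual $r-P_k\1$ — which is what will let us reuse the machinery from Theorem~\ref{thm: main}.

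Next I would control that residual through the smoothness of $f$. From the block form \eqref{eqn: Hessian formula} and $\lVert P_k\rVert_1=1$, the absolute row sums of $\nabla^2 f$ near $z_k'$ are governed by $\eta(P\1)_i$ and $\eta(P^{\top}\1)_j$, so a Gershgorin estimate gives $\lVert\nabla^2 f\rVert_{\mathrm{op}}=O(\eta)$ there; that is, $f$ is $L$-smooth in a neighborhood of $z_k'$ with $L=O(\eta)$, the hypothesis $\epsilon_k<1$ being what confines the iterate to that neighborhood (the counterpart of $t>t_1$). The elementary inequality $\max f\ge f(z_k')+\tfrac{1}{2L}\lVert\nabla f(z_k')\rVert_2^2$ for $L$-smooth concave $f$ then yields $\lVert r-P_k\1\rVert_2^2\le 2L\epsilon_k$, hence a marginal violation $\lVert r-P_k\1\rVert_1\le\sqrt{2nL\epsilon_k}=O(\sqrt{\epsilon_k})$. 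Feeding this $O(\sqrt{\epsilon_k})$ marginal violation into the same device that produced \eqref{eqn: nutz result} — Theorem~4.4 of \citet{ghosal2022convergence}, which bounds the KL gap to $P^\star_\eta$ by the marginal violation of a near-feasible coupling, followed by Pinsker — gives, for the same constant $q$,
\[
\lVert P_k-P^\star_\eta\rVert_1^2\;\le\;\mathrm{KL}(P_k\,\|\,P^\star_\eta)+\mathrm{KL}(P^\star_\eta\,\|\,P_k)\;\le\;q\sqrt{\epsilon_k},
\]
i.e.\ $\lVert P_k-P^\star_\eta\rVert_1\le\sqrt q\,\epsilon_k^{1/4}$; the exponent $\tfrac{1}{4}$ (versus $\tfrac{1}{2}$ for the $t$-indexed bound) is precisely the square root lost when trading the Lyapunov gap for the marginal violation. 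Combining the three displays gives $\mathrm{dist}(\mathcal{F},P_k)\le 6n^2\exp(-\eta\Delta)+\sqrt q\,\epsilon_k^{1/4}=\zeta_k$, so $P_k$ is $(\lambda^\star,\zeta_k)$-sparse by Definition~\ref{defn: approximate sparsity}.

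The step I expect to take the most care is the smoothness bookkeeping: the intermediate plan $P$ has entries that are a priori exponentially large, so $L=O(\eta)$ holds only locally, and making the one-gradient-step inequality rigorous needs a short bootstrap verifying that $z_k'+\tfrac{1}{L}\nabla f(z_k')$ stays where $\lVert P\rVert_1=O(1)$ — this is where $\epsilon_k<1$ is spent. A secondary task is to cast Theorem~4.4 of \citet{ghosal2022convergence} in the form actually needed (a bound on the KL gap in terms of the $\ell_1$ marginal violation of an arbitrary coupling with one exact marginal, rather than specifically of a Sinkhorn iterate) so that it applies to $P_k$. Finally, I would note a sharper but less uniform alternative: strong duality gives $\max f=L(P^\star_\eta,z)$ for every $z$ while $f(z)=L(P_z,z)$, and substituting $\log P_z=\eta(-C+x\1^{\top}+\1 y^{\top})-1$ collapses the gap to the exact identity $\eta(\max f-f(z_k'))=\mathrm{KL}(P^\star_\eta\,\|\,P_k)$, whence Pinsker gives $\lVert P_k-P^\star_\eta\rVert_1\le\sqrt{2\eta\,\epsilon_k}$, an $O(\epsilon_k^{1/2})$ bound — at the cost of a constant depending on $\eta$ instead of the universal $q$ of Theorem~\ref{thm: main}.
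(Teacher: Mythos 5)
Your decomposition (triangle inequality through $P^{\star}_{\eta}$, Corollary~9 of \citet{weed2018explicit} for $\mathrm{dist}(\mathcal{F},P^{\star}_{\eta})$, and the observation that $P_k$ has an exact column marginal so only the row residual matters) is exactly the skeleton of the paper's proof. The divergence is in how the row-marginal error is controlled, and your version has a real gap there. You bound $\lVert \nabla f(z_k')\rVert_2^2 \leq 2L\epsilon_k$ via $L$-smoothness with $L=O(\eta)$, but (i) that inequality needs $L$-smoothness along the whole segment from $z_k'$ to $z_k'+\frac{1}{L}\nabla f(z_k')$, and the hypothesis $\epsilon_k<1$ by itself does not confine the intermediate plans' mass there (the entries of $P$ depend exponentially on $x$, so some sublevel-set or bootstrap argument is genuinely required, as you yourself flag); and (ii) even granting it, your marginal bound $\lVert r-P_k\1\rVert_1 \leq \sqrt{2nL\epsilon_k}$ drags factors of $n$ and $\eta$ into the final constant, whereas the theorem asserts the \emph{same} $q$ as in Theorem~\ref{thm: main}. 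Your route therefore cannot produce the stated conclusion without modification.

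The paper sidesteps both issues with a single citation: Lemma~2 of \citet{altschuler2017near} bounds the marginal KL divergence itself, $\alpha_k := \mathrm{KL}(r\,\|\,P_k\1) \leq \max_z f(z) - f(x_k,y_{k,\star}) \leq \epsilon_k$, with no smoothness constant entering at all. Then Lemma~3.2 of \citet{ghosal2022convergence} (the statement behind the Theorem~4.4 invocation in Theorem~\ref{thm: main}) gives $\mathrm{KL}(P_k\|P^{\star}_{\eta})+\mathrm{KL}(P^{\star}_{\eta}\|P_k) \leq q\min(\alpha_k,\sqrt{\alpha_k})$ with the identical constant $q$, and Pinsker together with $\epsilon_k<1$ yields $\lVert P_k-P^{\star}_{\eta}\rVert_1 \leq \sqrt{q}\,\epsilon_k^{1/4}$. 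So your plan is repairable by replacing the smoothness step with that lemma, but as written it neither closes the local-smoothness bootstrap nor delivers the claimed constant. (Your closing remark that strong duality gives the exact identity $\eta(\max f - f(z_k')) = \mathrm{KL}(P^{\star}_{\eta}\|P_k)$ is correct and would give an $O(\sqrt{\eta\epsilon_k})$ bound, but again not with the constant $q$ of Theorem~\ref{thm: main}.)
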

The proof is similar to that of Theorem \ref{thm: main}, and we defer it to Appendix \ref{sec: proof of general sparsity}. We remark that the \(\epsilon_k^{1/4}\) term in Theorem \ref{thm: subsidiary} becomes insignificant if \(\epsilon_k\) decreases at a super-exponential rate. Moreover, the statement in Theorem \ref{thm: subsidiary} would have shown monotonically improving approximate sparsity if one were to add a column scaling step inside the Newton inner loop in Algorithm \ref{alg:SNS}. For clarity, we do not include any matrix scaling in the Newton stage of Algorithm \ref{alg:SNS}. However, combining different techniques in the second step might aid convergence and is worth investigating.

\paragraph{Sparsity under non-uniqueness}
We explore sparsity guarantees when the optimal transport problem lacks a unique solution. Although these cases are rare in practice, we point out the somewhat surprising result: There exist conditions for which one can derive sparsity properties of the optimal face using tools from extremal combinatorics on bipartite graphs \citep{erdHos1974probabilistic,jukna2011extremal}.
On a high level, the optimality of \(P^{\star} \in \mathcal{F}\) forbids certain substructures from forming in an associated bipartite graph, which in turn gives an upper bound on the number of nonzero entries of \(P^{\star}\). We defer the proof to Appendix \ref{sec: proof of general sparsity}.
\begin{thm}\label{thm: extremal combinatorics}
For a cost matrix \(C = [c_{ij}]_{i,j = 1}^{n} \in \mathbb{R}^{n\times n}\), suppose that \(c_{ij} + c_{i'j'} = c_{i'j} + c_{ij'}\) if and only if \(i = i'\) or \(j = j'\). Then one has \(\tau(\mathcal{F}) \leq \frac{1 + o(1)}{\sqrt{n}} \).
\end{thm}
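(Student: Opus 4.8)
}
The plan is to translate the sparsity of the optimal face $\mathcal{F}$ into an extremal problem on a bipartite graph and then apply a Zarankiewicz-type bound. Let $G$ be the bipartite graph with vertex classes $[n]\sqcup[n]$ and edge set $E(G) := \{(i,j) : p_{ij} > 0 \text{ for some } P \in \mathcal{F}\}$. Since $\mathcal{F} = \mathrm{Conv}(\mathcal{O})$, every $M \in \mathcal{F}$ is a convex combination of optimal vertices, so $\mathrm{supp}(M) \subseteq \bigcup_{v \in \mathcal{O}} \mathrm{supp}(v) = E(G)$. Hence $\tau(\mathcal{F}) = \max_{M \in \mathcal{F}} \lVert M \rVert_{0}/n^{2} \le |E(G)|/n^{2}$, and it suffices to prove $|E(G)| \le (1+o(1))\,n^{3/2}$.

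The crux is to show that $G$ contains no four-cycle, i.e.\ no $K_{2,2}$. Pick a point $P^{\circ}$ in the relative interior of the polytope $\mathcal{F}$, so that $\mathrm{supp}(P^{\circ}) = \bigcup_{v \in \mathcal{O}} \mathrm{supp}(v) = E(G)$. Suppose, for contradiction, that there are indices $i \ne i'$ and $j \ne j'$ with all four pairs $(i,j),(i',j),(i,j'),(i',j')$ in $E(G)$; then the corresponding four entries of $P^{\circ}$ are strictly positive. Let $D$ be the matrix equal to $+1$ at $(i,j)$ and $(i',j')$, equal to $-1$ at $(i',j)$ and $(i,j')$, and $0$ elsewhere. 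Then $D\1 = 0$ and $D^{\top}\1 = 0$, so for $\epsilon > 0$ sufficiently small both $P^{\circ} \pm \epsilon D$ lie in the feasible set $\mathcal{P}$. Because $P^{\circ}$ minimizes $C \cdot P$ over $\mathcal{P}$ and $C \cdot (P^{\circ} \pm \epsilon D) = C \cdot P^{\circ} \pm \epsilon\, (C \cdot D)$, optimality forces $C \cdot D = 0$, i.e.\ $c_{ij} + c_{i'j'} = c_{i'j} + c_{ij'}$. By the hypothesis on $C$ this can hold only if $i = i'$ or $j = j'$, contradicting $i \ne i'$, $j \ne j'$. Therefore $G$ is $K_{2,2}$-free.

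Finally I would invoke the classical K\H{o}v\'ari--S\'os--Tur\'an (Zarankiewicz) bound: a bipartite graph with both sides of size $n$ and no $K_{2,2}$ has at most $\frac{1}{2}\bigl(1 + \sqrt{4n-3}\bigr)\,n = (1+o(1))\,n^{3/2}$ edges. Combined with the reduction above, $\tau(\mathcal{F}) \le |E(G)|/n^{2} \le \frac{1}{2n}\bigl(1 + \sqrt{4n-3}\bigr) = \frac{1+o(1)}{\sqrt{n}}$, which is the claim. The main obstacle is the four-cycle argument: one must guarantee that a \emph{single} optimal plan is simultaneously positive on all four would-be edges (handled by passing to a relative-interior point of $\mathcal{F}$, rather than four separate witnesses), and then check that the cyclic perturbation $D$ is both marginal-preserving and, via optimality, forces precisely the additive cost identity that the genericity assumption rules out. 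The support-containment reduction and the extremal graph theory input are routine.
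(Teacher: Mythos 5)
Your proof is correct and follows essentially the same route as the paper's: associate a bipartite graph to an optimal plan, use the cyclic $\pm 1$ perturbation together with optimality to show the genericity hypothesis forbids a $K_{2,2}$, and then invoke the K\H{o}v\'ari--S\'os--Tur\'an bound. The only cosmetic difference is your detour through a relative-interior point of $\mathcal{F}$; since $\tau(\mathcal{F})$ is defined as a maximum over individual plans $M \in \mathcal{F}$, the paper simply runs the four-cycle argument on each optimal plan's own support, which makes that step unnecessary (though your version is also valid).
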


\begin{figure}[t]
    \centering
    \begin{subfigure}[b]{\textwidth}
        \centering
        \includegraphics[width=.85\textwidth]{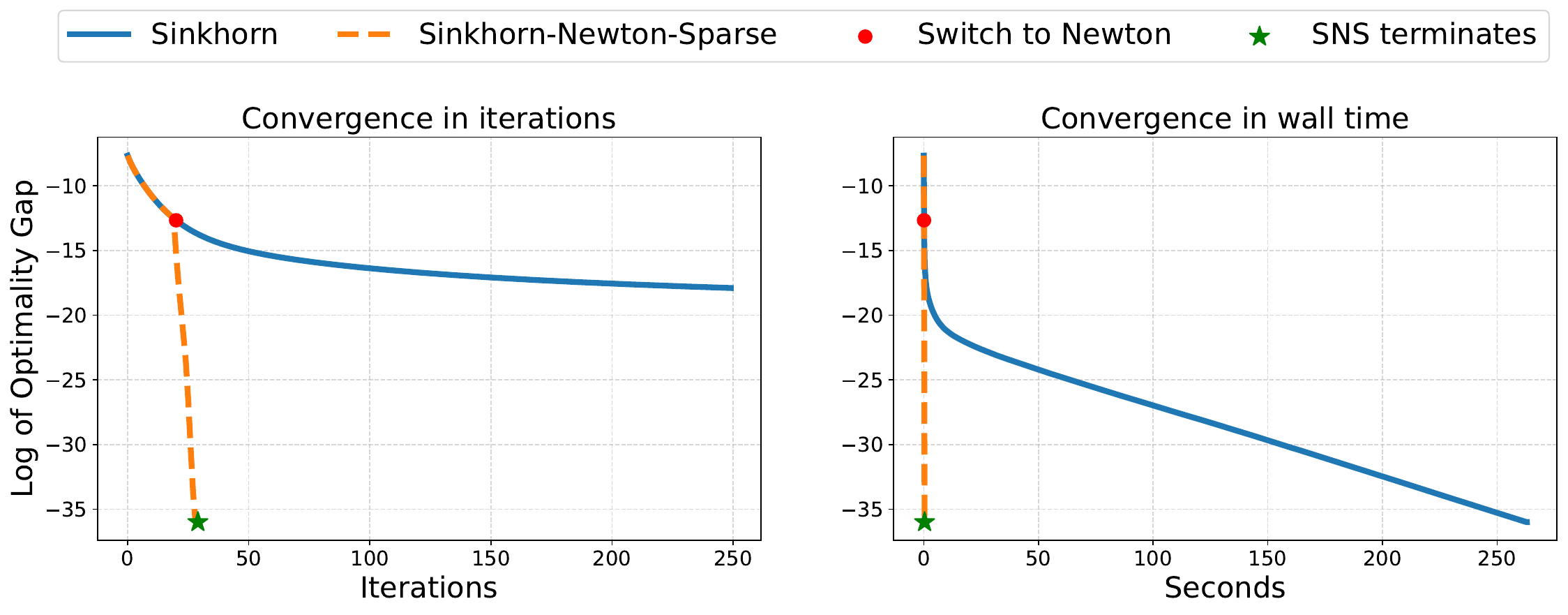}
        \caption{Entropic random linear assignment}
        \label{fig:my_label}
    \end{subfigure}
    \hfill
    \begin{subfigure}[b]{\textwidth}
        \centering
        \includegraphics[width=.85\textwidth, trim={0 0 0 2cm},clip]{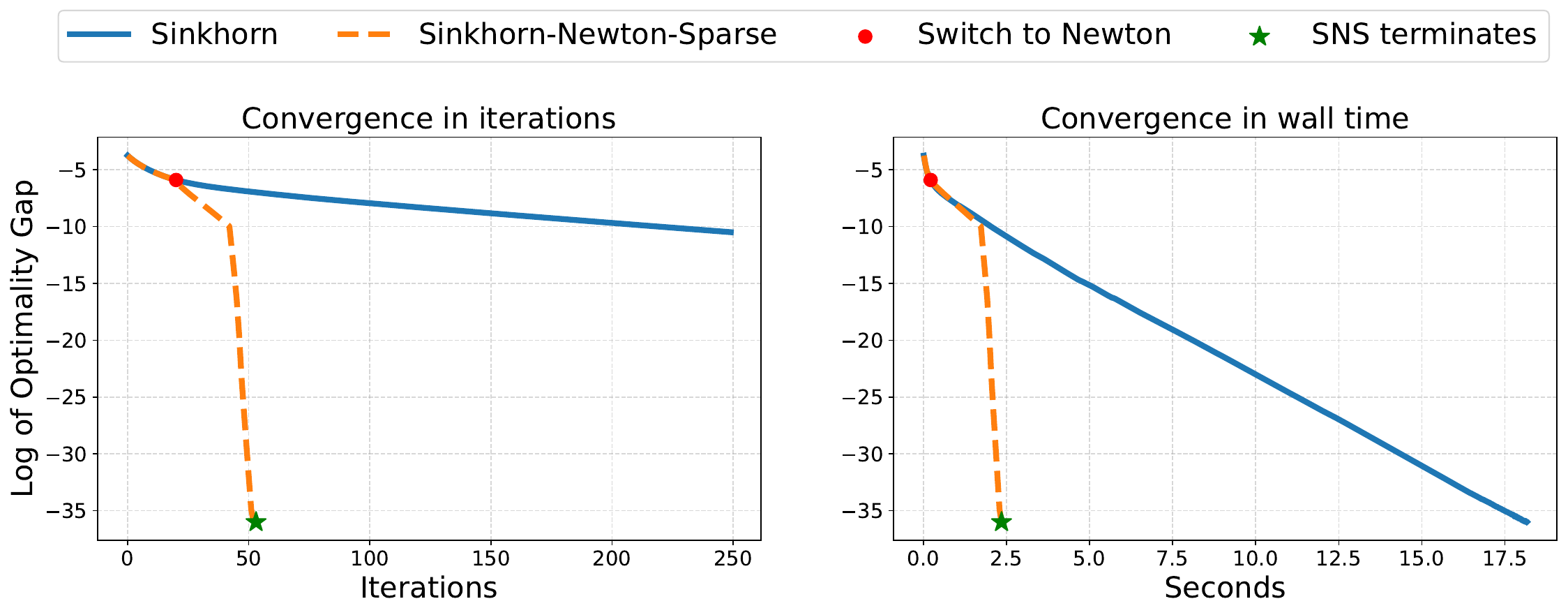}
        \caption{Optimal transport on the MNIST dataset under transportation cost \(\lVert x-y \rVert _2^2\)}
        \label{fig:my_label2}
    \end{subfigure}
    \hfill
    \begin{subfigure}[b]{\textwidth}
        \centering
        \includegraphics[width=.85\textwidth, trim={0 0 0 2cm},clip]{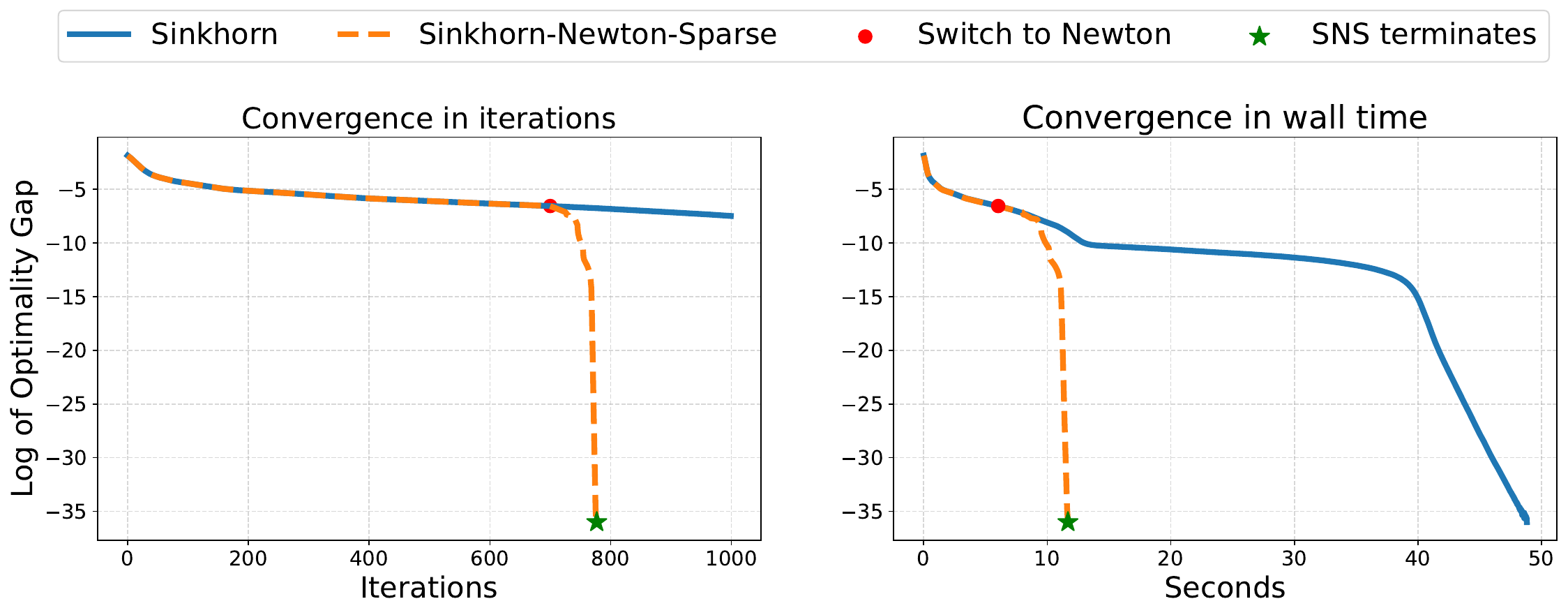}
        \caption{Optimal transport on the MNIST dataset under transportation cost \(\lVert x-y \rVert _1\)}
        \label{fig:my_label3}
    \end{subfigure}
    \caption{Performance comparison between Algorithm \ref{alg:SNS} and the Sinkhorn algorithm.}
    \label{fig:main}
\end{figure}

\section{Numerical result}
\label{sec: expts}
We conduct numerical experiments to compare the original Sinkhorn and SNS algorithms. We use the following settings: We obtained the runtime data on a 2021 Macbook Pro laptop with an Apple M1 Pro chip. The linear system solving is done through the conjugate gradient step as mentioned in Algorithm \ref{alg:SNS}. To set a challenging case, we use an entropic regularization with \(\eta = 1200\) throughout the experiments. We refer the reader to Appendix \ref{sec : choice of eta} for the performance of SNS under different entropy parameter \(\eta\). We defer a comparison of SNS with the Sinkhorn-Newton algorithm \citep{brauer2017sinkhorn} to Appendix \ref{sec: Sinkhorn-Newton ablation}, where we show that removing sparsification from SNS results in a prohibitively expensive algorithm due to its  \(O(n^3)\) runtime complexity. Additionally, we perform experiments on the numerical performance of quasi-Newton methods \citep{nocedal1999numerical}.


In the first numerical test, we consider the random assignment problem with entropic regularization \citep{mezard1987solution,steele1997probability,aldous2001zeta}, considered a hard instance of optimal transport. The cost matrix \(C = [c_{ij}]_{ij= 1}^{n} \in \mathbb{R}^{n \times n}\) with \(n = 500\) is generated by \(c_{ij} \sim \mathrm{Unif}([0,1])\). The source and target vectors are \(c = r = \frac{1}{n}\mathbf{1}\). We run Algorithm \ref{alg:SNS} with \(N_1 = 20\) and a target sparsity of \(\lambda = 2/n\). Figure \ref{fig:my_label} shows that SNS drastically outperforms Sinkhorn in iteration and runtime. 

In the second numerical experiment, similar to the experiment setting in \citet{cuturi2013sinkhorn}, we consider the more practical case of optimal transport on the MNIST dataset. In particular, two images are respectively converted to a vector of intensities on the $28 \times 28$ pixel grid, which are then normalized to sum to 1. The entry corresponding to the \((i_1,i_2)\)-th pixel is conceptualized as the point \((i_1/28, i_2/28) \in \mathbb{R}^{2}\), and the transport cost is the Euclidean distance cost \(\lVert x-y\rVert^2\). Similarly, we pick \(N_1 = 20\) in Algorithm \ref{alg:SNS} with a target sparsity of \(\lambda = 2/n\). Figure \ref{fig:my_label2} shows a similar performance boost to the Sinkhorn algorithm.




As the approximate sparsity analysis underlying Section \ref{sec: analysis} mainly focuses on the situation of unique solutions, it is of interest to test problems with many optimal transport solutions, as it is a case where the SNS algorithm might potentially break in practice. For this purpose, we consider the MNIST example under the \(l_1\) transport cost \(\lVert x-y \rVert_1\), which is known to have non-unique solutions due to the lack of convexity in the \(\lVert \cdot \rVert_1\) norm \citep{villani2009optimal}. As the Sinkhorn algorithm converges quite slowly, we pick \(N_1 = 700\) before switching to the Newton stage. Choosing a target sparsity of \(\lambda = 15/n\) is sufficient for convergence to the ground truth, which shows that SNS runs quite well even without a uniqueness guarantee. 

In Table \ref{tab:runtime_iteration_comparison}, we benchmark the runtime for Sinkhorn versus SNS to reach machine accuracy, which shows that SNS has an overall runtime advantage for convergence. In particular, we list the performance of SNS in the Newton stage, which shows that early stopping of Sinkhorn matrix scaling steps and switching to the Newton stage results in an order of magnitude speedup in iteration counts. While we cannot prove the conjectured super-exponential convergence, the low iteration count in the Newton stage shows strong numerical support. 
\begin{table}[t]
\renewcommand{\arraystretch}{1.1}
    \centering
    \caption{Performance comparison between SNS and Sinkhorn. Both algorithms are run until they reach machine accuracy.}
    \begin{tabular}{l|c|cS[table-format=3.2]S[table-format=4.0]}
        \toprule
        \textbf{Case} & \textbf{Method} & \textbf{Stage} & \textbf{Time (s)} & \textbf{Iterations}\\
        \midrule
        \multirow{4}{*}{Random} & \multirow{3}{*}{SNS} & Sinkhorn & {0.12} & {20} \\
        &  & Newton & {0.22} & {9} \\
        &  & Total & \textbf{0.34} & \textbf{29} \\\cline{2-5}
        & Sinkhorn & Total & 233.36 & 56199 \\
        \hline
        \multirow{4}{*}{MNIST L2} & \multirow{3}{*}{SNS} & Sinkhorn & {0.17} & {20} \\
        & & Newton & {2.16} & {33} \\
        &  & Total & \textbf{2.33} & \textbf{53} \\\cline{2-5}
        & Sinkhorn & Total & 18.84 & 2041 \\
        \hline
        \multirow{4}{*}{MNIST L1} & \multirow{3}{*}{SNS} & Sinkhorn & {6.28} & {700} \\
        &  & Newton & {5.94} & {77} \\
        &  & Total & \textbf{12.22} & \textbf{777} \\\cline{2-5}
        & Sinkhorn & Total & 45.75 & 5748 \\
        \bottomrule
    \end{tabular}

    \label{tab:runtime_iteration_comparison}
\end{table}

\vspace{-2pt} 

\section{Conclusion}
We propose the Sinkhorn-Newton-Sparse algorithm, demonstrating its empirical super-exponential convergence at a \(O(n^2)\) per-iteration cost through numerical validation. We prove several novel bounds on approximate sparsity underlying the algorithm. For problems with non-unique solutions, we elucidate a novel relationship between approximate sparsity and extremal combinatorics. We contend that this new method significantly advances the field of high-precision computational optimal transport and complements the existing Sinkhorn algorithm. 

For future work, it may be interesting to study how the approximation accuracy of the sparsification step affects the algorithm's convergence and how to devise more sophisticated sparse approximation techniques beyond simple thresholding. Moreover, it is an exciting direction to incorporate existing optimal transport techniques with the SNS algorithm, including greedy row/column optimization, dynamic regularization scheduling, and hardware-based parallel accelerations. Theoretically, analytic properties on the Lyapunov potential might provide more insight into the region where the sparsified Newton algorithm achieves the super-exponential convergence we empirically observe.

\bibliography{iclr2024_conference}
\bibliographystyle{iclr2024_conference}

\newpage
\begin{appendix}
\section{Equivalence of primal and primal-dual form}\label{sec: primal versus primal dual}

We now show that the primal form in \eqref{eqn: LP+entropy} can be obtained from the primal-dual form by eliminating the dual variables. 
\begin{prop}
Define
\begin{align*}
L(P, x,y) = \frac{1}{\eta} P \cdot \log P + C\cdot P - x\cdot(P\1-r) - y\cdot(P^{\top}\1-c),
\end{align*}
and then the following equation holds:
\begin{equation}
\max_{x,y}\min_{P}  L(P, x,y) = \min_{P: P\1=r, P^{\top}\1=c} \frac{1}{\eta} P\cdot \log P + C\cdot P
\end{equation}
Moreover, for the Lyapunov potential function \(f\) in \eqref{eqn: dual form}, one has
\begin{equation}
f(x,y) =   \min_{P} L(P, x,y).
\end{equation}
\end{prop}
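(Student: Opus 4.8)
The proposition asks to establish two facts: that the minimax problem equals the entropy-regularized primal problem, and that $f(x,y) = \min_P L(P,x,y)$.

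\textbf{Plan.} The plan is to treat the two claimed identities in sequence, with the second one doing most of the work. First I would compute the inner minimization $\min_P L(P,x,y)$ explicitly. Since $L$ is separable across the entries $p_{ij}$ and strictly convex in each (the term $\frac{1}{\eta}p_{ij}\log p_{ij}$ has positive second derivative on $p_{ij}>0$), the minimizer is found by setting the partial derivative to zero: $\partial_{p_{ij}} L = \frac{1}{\eta}(\log p_{ij} + 1) + c_{ij} - x_i - y_j = 0$, which gives $p_{ij} = \exp(\eta(-c_{ij}+x_i+y_j)-1)$. Substituting this back into $L$, the term $\frac{1}{\eta}p_{ij}\log p_{ij} = p_{ij}(-c_{ij}+x_i+y_j) - \frac{1}{\eta}p_{ij}$, so $\frac{1}{\eta}P\cdot\log P + C\cdot P - x\cdot P\1 - y\cdot P^\top\1 = -\frac{1}{\eta}\sum_{ij}p_{ij} = -\frac{1}{\eta}\sum_{ij}\exp(\eta(-c_{ij}+x_i+y_j)-1)$, and the remaining terms $x\cdot r + y\cdot c$ survive untouched. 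This is exactly $f(x,y)$ as defined in \eqref{eqn: dual form}, proving the second identity.

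\textbf{The minimax identity.} For the first identity, I would invoke strong duality. One direction (weak duality), $\max_{x,y}\min_P L \leq \min_{P:\,P\1=r,\,P^\top\1=c}\frac{1}{\eta}P\cdot\log P + C\cdot P$, is immediate: for any feasible $P$ the constraint terms vanish, so $L(P,x,y) = \frac{1}{\eta}P\cdot\log P + C\cdot P$ for all $x,y$, hence $\min_{P'}L(P',x,y) \leq$ that value; taking max over $x,y$ then min over feasible $P$ preserves the inequality. For the reverse direction I would argue that Slater's condition holds — the relative interior of the probability-simplex-like feasible set is nonempty (e.g.\ $P = rc^\top$ lies strictly inside, with all entries positive, provided $r,c$ have positive entries), and the objective is convex with affine constraints — so strong duality applies and the duality gap is zero. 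Alternatively, one can note the inner problem $\min_P L$ is solved in closed form above, and maximizing the resulting smooth concave $f$ over $\R^n\times\R^n$ yields a point where $\nabla f = 0$, i.e.\ $P\1 = r$ and $P^\top\1 = c$ by \eqref{eqn: Hessian formula}; this recovered $P$ is primal feasible and achieves equality, closing the gap.

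\textbf{Main obstacle.} The computations in the first paragraph are routine. The only genuine subtlety is justifying the interchange of max and min rigorously — ensuring a saddle point exists. The cleanest route is Slater's condition plus convex duality, but one must be slightly careful that the primal feasible set is compact (it is, being a bounded polytope) so the primal minimum is attained, and that the dual is finite. I would handle the possible edge case where some $r_i$ or $c_j$ is zero by restricting to the support, or simply assume $r,c > 0$ entrywise as is standard. I do not expect this to require more than a short paragraph citing a standard strong-duality theorem for convex programs with a Slater point.
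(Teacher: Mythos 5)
Your proof is correct, and the elimination-of-$P$ computation (stationarity $p_{ij}=\exp(\eta(-c_{ij}+x_i+y_j)-1)$, substitute back, watch the linear terms survive as $x\cdot r+y\cdot c$) is exactly what the paper does for the identity $f(x,y)=\min_P L(P,x,y)$. Where you diverge is in justifying the minimax identity. The paper first writes $\min_P\max_{x,y}L(P,x,y)=\min_{P:\,P\1=r,\,P^\top\1=c}\frac{1}{\eta}P\cdot\log P+C\cdot P$ as the standard Lagrange-multiplier reformulation, and then swaps $\max$ and $\min$ by appealing to a minimax theorem on the grounds that $L$ is convex in $P$ and concave in $(x,y)$. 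You instead prove weak duality directly (constraint terms vanish on the feasible set) and close the gap with Slater's condition, using $P=rc^\top$ as a strictly positive feasible point, with the compact-polytope and attainment caveats and the $r,c>0$ assumption made explicit; your alternative closing argument via $\nabla f=0$ at a maximizer of $f$ is also sound, though it silently assumes the maximizer of $f$ exists (true for $r,c>0$, but worth a word). The trade-off: the paper's route is shorter but its minimax-theorem invocation is somewhat informal, since neither the $P$-domain nor the $(x,y)$-domain is compact and the applicability of, say, Sion's theorem is not checked; your strong-duality route is the more airtight one for a convex program with affine constraints, at the cost of a few extra lines and an explicit positivity hypothesis on the marginals.
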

\begin{proof}
The use of Lagrange multiplier implies the following equality:
\[
\min_{P} \max_{x,y} L(P, x,y) = \min_{P: P\1=r, P^{\top}\1=c}\frac{1}{\eta}P\cdot \log P + C\cdot P.
\]
As \(L\) is concave in \(x, y\) and convex in \(P\), one can invoke the minimax theorem to interchange the operations of maximization and minimization. Therefore,
\begin{equation}\label{eqn: L equation}
\max_{x,y}\min_{P}  L(P, x,y) = \min_{P: P\1=r, P^{\top}\1=c} \frac{1}{\eta} P\cdot \log P + C\cdot P
\end{equation}

In terms of entries, one writes \(L(P, x, y)\) as follows:
\begin{equation}\label{eqn: primal-dual form}
    \max_{x_i,y_j} \min_{p_{ij}}  \frac{1}{\eta} \sum_{ij}p_{ij}\log p_{ij} + \sum_{ij} c_{ij} p_{ij} - \sum_i x_i(\sum_j p_{ij}- r_i) - \sum_j y_j (\sum_i p_{ij}- c_j).
\end{equation}

We then solve the inner min problem explicitly by taking the derivative of $p_{ij}$ to zero, from which one obtains
\[
p_{ij} = \exp(\eta(-c_{ij} + x_i + y_j) -1).
\]
Plugging in the formula for \(p_{ij}\), one has
\begin{align*}
    \min_{P}L(P, x,y) = -\frac{1}{\eta}\sum_{ij}\exp(\eta(-c_{ij} + x_i + y_j) -1) + \sum_i r_ix_i + \sum_j c_j y_j = f(x,y).
\end{align*}
\end{proof}

\section{Proof of Theorem \ref{thm: subsidiary} and Theorem \ref{thm: extremal combinatorics}}\label{sec: proof of general sparsity}

We present the proof as follows:

\begin{proof} (of Theorem \ref{thm: subsidiary})
Same as Theorem \ref{thm: main}, we use the proof strategy that the approximate sparsity guarantee monotonically improves as \(P_k\) converges to \(P_{\eta}^{\star}\). By Lemma 3.2 in \citet{ghosal2022convergence} and Pinsker's inequality, for \(\alpha_k = \mathrm{KL}( r || P_{k}\mathbf{1})\), one has
\[
\lVert P_{k} - P^{\star}_{\eta} \rVert_{1}^2 \leq \mathrm{KL}(P_{k} || P_{\eta}^{\star}) + \mathrm{KL}( P_{\eta}^{\star} || P_{k}  ) \leq q\min\left(\alpha_k, \sqrt{\alpha_k}\right).
\]
By Lemma 2 in \citet{altschuler2017near}, one has 
\[
\alpha_{k} \leq \max_{z}f(z) -  f(x_k, y_{k,\star}) \leq \max_{z}f(z) -  f(x_k, y_{k}) = \epsilon_{k},
\]
where the second inequality comes from the definition of \(y_{k,\star}\). From the proof in Theorem \ref{thm: main}, there exists \(P_{\star} \in \mathcal{F}\) such that
\[
\lVert P_{k} - P^{\star} \rVert_{1} \leq 6n^2 \exp\left(-\eta\Delta\right) + \sqrt{q}\min\left(\left(\epsilon_k\right)^{1/2}, \left(\epsilon_k\right)^{1/4}\right).
\]
Therefore, the statement in the theorem specializes to the more relevant case where \(\epsilon_k < 1\).
\end{proof}

\begin{proof} (of Theorem \ref{thm: extremal combinatorics})

We begin by constructing the bipartite graph associated with an optimal transport plan \(P^{\star}\).
Let \(A,B\) be two index sets with \(A \cap B = \emptyset\) and \(\lvert A \rvert = \lvert B \rvert = n\). For an optimal transport plan \(P^{\star} \in \mathbb{R}^{n \times n}\), we define its associated bipartite graph \(G_{P^{\star}} = (A \cup B, E_{P^{\star}})\), where \((i,j) \in E_{P^{\star}}\) if and only if the \((i,j)\)-th entry of \(P^{\star}\) is non-zero. 

Suppose that there exists \(i,i' \in A\) and \(j,j' \in B\) with \(i \not = i'\), \(j \not = j'\), such that \(P^{\star}\) is nonzero on the \((i,j), (i,j'), (i',j), (i',j')\) entries. Then, one can consider a perturbation \(E\) to \(P^{\star}\). Specifically, \(E\) is \(+1\) on entries \((i,j), (i',j')\), and is \(-1\) on entries \((i,j'), (i',j)\), and is zero everywhere else. As \(E\mathbf{1} = E^{\top}\mathbf{1} = \mathbf{0}\), for sufficiently small \(\epsilon\), it follows that \(P^{\star} \pm \epsilon E\) is still feasible. 

We note that one must have \(C \cdot E = 0\), otherwise one would contradict the optimality of \(P^{\star}\), but this would mean \(c_{ij} + c_{i'j'} = c_{ij'} + c_{i'j}\), which contradicts the assumption on \(C\). Thus, \(P^{\star}\) cannot be simultaneously nonzero on the \((i,j), (i,j'), (i',j), (i',j')\) entries. By the definition of \(E_{P^{\star}}\), it would mean that \(G_{P^{\star}}\) cannot simultaneously contain the edges \((i,j), (i,j'), (i',j), (i',j')\). Thus, in terms of graph-theoretic properties, we have shown that \(G_{P^{\star}}\) is \(C_{2,2}\) free, whereby \(C_{2,2}\) is the \(2\times 2\) clique. Then, by the \(C_{2,2}\) free property in Theorem 2.10 in \citep{jukna2011extremal} shows \(E_{P^{\star}}\) cannot have more than \(n\sqrt{n} + n\) edges. Thus \(\tau(P^{\star}) \leq \frac{1 + o(1)}{\sqrt{n}}\), as desired.
\end{proof}

\section{Sinkhorn-Newton-Sparse for augmented Lyapunov function}\label{sec: augmented Lyapunov function}
Since the Lyapunov function \(f\) satisfies \(f(x,y) = f(x + \gamma\1, y - \gamma\1)\) for any scalar \(\gamma\), this implies that \(f\) has a degenerate direction of \(v = \begin{bmatrix}
\1\\
-\1
\end{bmatrix}\). Thus, to maintain numerical stability, we use in practice an augmented Lyapunov potential
\[
f_{\mathrm{aug}}(x,y) := f(x,y) - \frac{1}{2}(\sum_{i}x_{i} - \sum_{j}y_{j})^2.
\]
Switching to the augmented Lyapunov potential does not change the task, as \((x^{\star}, y^{\star}) = \argmax_{x,y} f_{\mathrm{aug}}(x,y)\) is simply the unique maximizer of \(f\) which satisfies \(\sum_{i}x^{\star}_{i} = \sum_{j}y^{\star}_{j}\). Moreover, as \(\nabla^{2} f_{\mathrm{aug}} = \nabla^{2} f - vv^{\top}\), one can adapt the Hessian approximation to a superposition of rank-1 and sparse matrix \citep{candes2011robust}, which will likewise lead to \(O(n^2)\) complexity.  

We introduce Algorithm \ref{alg:SNS augmented}, a variation of Algorithm \ref{alg:SNS}. This altered version uses the augmented Lyapunov potential 
\(f_{\mathrm{aug}}\) to accommodate the degenerate direction \(v = \begin{bmatrix}
\1\\
-\1
\end{bmatrix}\):
\[
f_{\mathrm{aug}}(x,y) := f(x,y) - \frac{1}{2}(\sum_{i}x_{i} - \sum_{j}y_{j})^2.
\]
As the discussion in Section \ref{sec: main alg} shows, optimizing for \(f_{\mathrm{aug}}\) is equivalent to optimizing for \(f\). Algorithm \ref{alg:SNS augmented} differs from the original algorithm in two respects. First, the initialization of \(z\) is through the output of the Sinkhorn stage after projection into the orthogonal complement of \(v\). Second, in the Newton stage, one obtains the Hessian approximation term \(M\) with the superposition of a sparse matrix \(\mathrm{Sparsify}(\nabla^2 f(z), \rho)\) and a rank-1 matrix \(vv^{\top}\). Importantly, for \(\lambda = \tau(\mathrm{Sparsify}(\nabla^2 f(z), \rho))\), the cost of left multiplication of \(M\) is \(O(\lambda n^2) + O(n)\). As the \(O(n)\) term is dominated by the \(O(\lambda n^2)\) term, the conjugate gradient step still has \(O(\lambda n^3)\) scaling. Overall, the computational complexity of Algorithm \ref{alg:SNS augmented} is nearly identical to Algorithm \ref{alg:SNS}.

\begin{algorithm}
\caption{Sinkhorn-Newton-Sparse (SNS) with augmented Lyapunov potential}\label{alg:SNS augmented}
\begin{algorithmic}[1]
\Require $f_{\mathrm{aug}}, x_{\mathrm{init}} \in \mathbb{R}^{n}, y_{\mathrm{init}} \in \mathbb{R}^{n}, N_1, N_2, \rho, i = 0$
\State \texttt{\# Sinkhorn stage}

\State $v \gets \begin{bmatrix}
\1\\
-\1
\end{bmatrix}$ \Comment{Initialize degenerate direction}

\State $(x,y) \gets (x_{\mathrm{init}}, y_{\mathrm{init}}) $ \Comment{Initialize dual variable}

\While{$i < N_1$} 
    \State \(P \gets \exp{\left(\eta(-C + x\1^{\top} + \1y^{\top}) - 1\right)}\)
    \State \( x \gets x + \left(\log(r) - \log(P\mathbf{1} ) \right)/\eta\) 
    \State \(P \gets \exp{\left(\eta(-C + x\1^{\top} + \1y^{\top}) - 1\right)}\)
    \State \( y \gets y + \left(\log(c) - \log(P^{\top}\mathbf{1} ) \right)/\eta\)
    \State $i \gets i + 1$
\EndWhile
\State \texttt{\# Newton stage}

\State $z \gets \mathrm{Proj}_{v^{\perp}}((x,y))$
\Comment{Project into non-degenerate direction of \(f\)}
\While{$i < N_1 + N_2$} 
    \State \(M \gets \text{Sparsify}(\nabla^2 f(z), \rho) - vv^{\top}\)\Comment{Truncate with threshold \(\rho\).}
    \State $\Delta z \gets \text{Conjugate\_Gradient}(M, -\nabla f_{\mathrm{aug}}(z))$ \Comment{Solve sparse linear system}
    \State $\alpha \gets \text{Line\_search}(f_{\mathrm{aug}}, z, \Delta z) $ \Comment{Line search for step size}
    \State $z \gets z + \alpha  \Delta z$
    \State $i \gets i + 1$
\EndWhile
\State Output dual variables $(x,y) \gets z$.
\end{algorithmic}
\end{algorithm}

\newpage

\section{Sinkhorn-Newton without sparsity}
\label{sec: Sinkhorn-Newton ablation}
In this section, we show that the Sinkhorn-Newton algorithm \citep{brauer2017sinkhorn} without accounting for Hessian sparsity would be prohibitively slower than the SNS Algorithm. We remark that the Sinkhorn-Newton algorithm can be obtained from SNS by removing the \(\mathrm{Sparsify}\) step in Algorithm \ref{alg:SNS}. In this case, one solves for the descent direction by directly using the Hessian, i.e., changing to $$\Delta z = -\left(\nabla^2 f(z)\right)^{-1}\nabla f(z).$$ Theoretically, this leads to a $O(n^3)$ per-iteration complexity, which is considerably costlier than our best-scenario complexity of $O(n^2)$ under $O(1/n)$ sparsity.

To empirically verify the impracticality of this method, we repeat the entropic random linear assignment problem experiment in Section \ref{sec: expts} with $n=2000$, \(N_1 = 20\) and \(\eta = 5000\). Table \ref{tab:ablation} summarizes our findings. As expected, we observe that the Sinkhorn-Newton method is significantly slower than SNS, especially in terms of per-iteration complexity. For larger \(n\), the Sinkhorn-Newton algorithm will be even more unfavorable.

\begin{table}[h]
    \centering
    \caption{Performance comparison between SNS and Sinkhorn-Newton during the Newton stage. Both algorithms are run until they reach machine accuracy.}
    \label{tab:ablation}
    \begin{tabular}{|l|c|c|S[table-format=2.2]|S[table-format=4]|}
        \hline
        \textbf{Method} & \textbf{Time (s)} & \textbf{Iterations} & \textbf{Time per iteration (s)}\\
        \hline
        SNS & \textbf{3.26} & 11  & \textbf{0.30} \\
        \hline
        Sinkhorn-Newton & 118.56 & \textbf{10} & 11.86 \\
        \hline
    \end{tabular}
\end{table}

\section{Comparison between Sinkhorn-Newton-Sparse with quasi-Newton methods}
\label{sec: Quasi-Newton method}

This section presents the result of quasi-Newton algorithms \citep{nocedal1999numerical} applied to entropic optimal transport problems. We show that, while being a reasonable proposal for solving entropic optimal transport with second-order information, traditional quasi-Newton algorithms work poorly in practice. 
In short, a Quasi-Newton algorithm can be obtained from SNS by replacing the Hessian approximation step in Algorithm \ref{alg:SNS}. Specifically, instead of sparse approximation, a quasi-Newton method approximates the Hessian \(M\) through the history of gradient information. In particular, we consider the Broyden–Fletcher–Goldfarb–Shanno (BFGS) algorithm and the limited-memory Broyden–Fletcher–Goldfarb–Shanno (L-BFGS) algorithm, which are two of the most widely used quasi-Newton methods. 

We repeat the three experiment settings in Section \ref{sec: expts}, and the result is shown in Figure \ref{fig:quasi-newton}. To ensure a fair comparison, the quasi-Newton candidate algorithms are given the same Sinkhorn initialization as in the SNS algorithm. As the plot shows, quasi-Newton algorithms do not show significant improvement over the Sinkhorn algorithm. Moreover, in the two experiments based on the MNIST dataset, both quasi-Newton candidates perform worse than the Sinkhorn algorithm in terms of runtime efficiency. As the iteration complexity of the quasi-Newton candidates does not exhibit the numerical super-exponential convergence shown in SNS, we conclude that noisy Hessian estimation from gradient history accumulation is inferior to direct sparse approximation on the true Hessian matrix.

\begin{figure}[t]
    \centering
    \begin{subfigure}[b]{\textwidth}
        \centering
        \includegraphics[width=.85\textwidth]{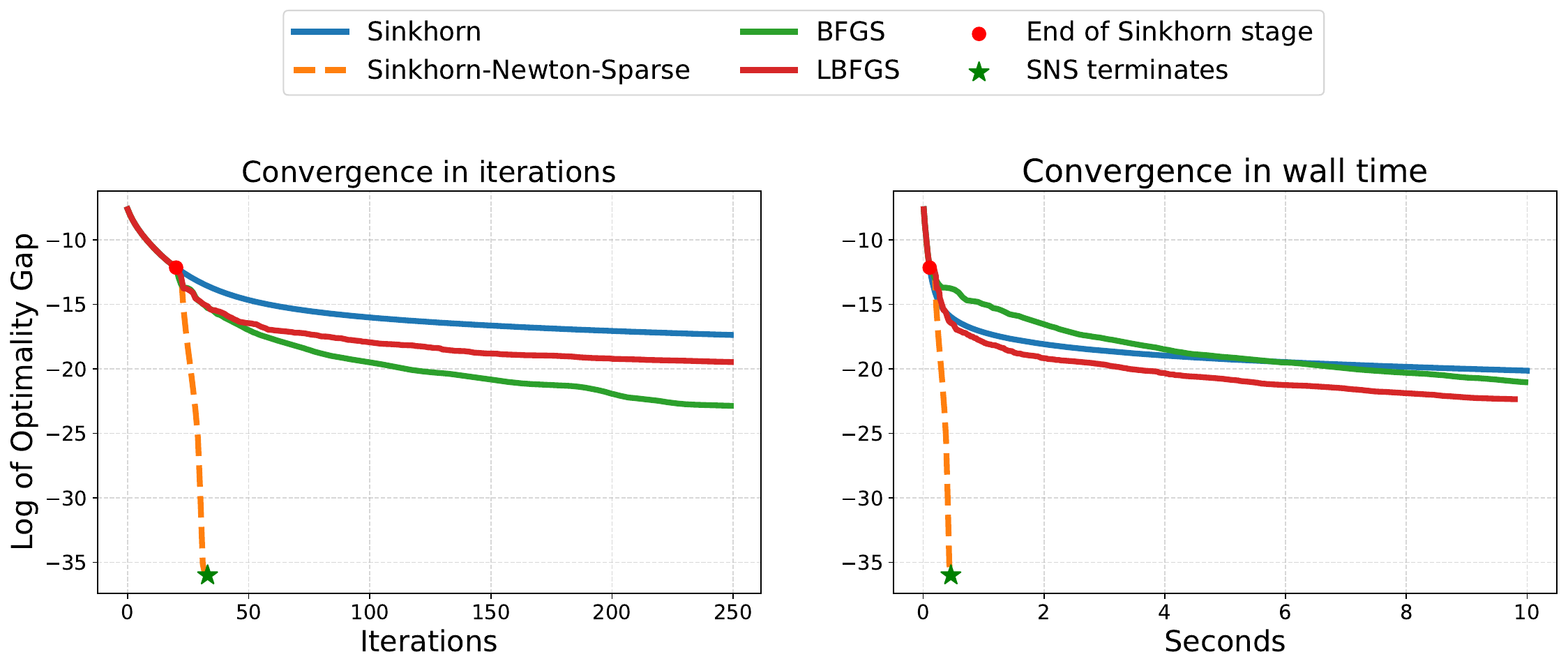}
        \caption{Entropic random linear assignment}
        \label{fig:my_label4}
    \end{subfigure}
    \hfill
    \begin{subfigure}[b]{\textwidth}
        \centering
        \includegraphics[width=.85\textwidth, trim={0 0 0 2.4cm},clip]{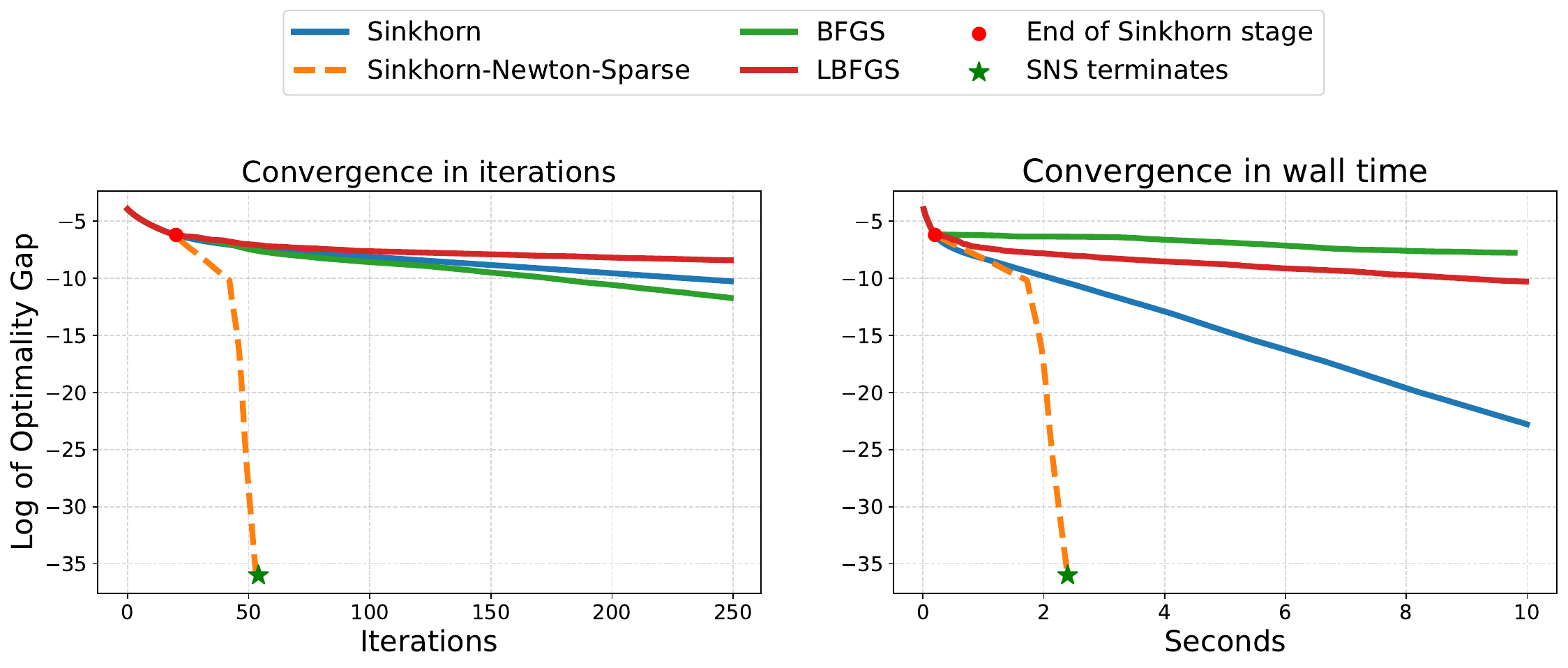}
        \caption{Optimal transport on the MNIST dataset under transportation cost \(\lVert x-y \rVert _2^2\)}
        \label{fig:my_label5}
    \end{subfigure}
    \hfill
    \begin{subfigure}[b]{\textwidth}
        \centering
        \includegraphics[width=.85\textwidth, trim={0 0 0 2.4cm},clip]{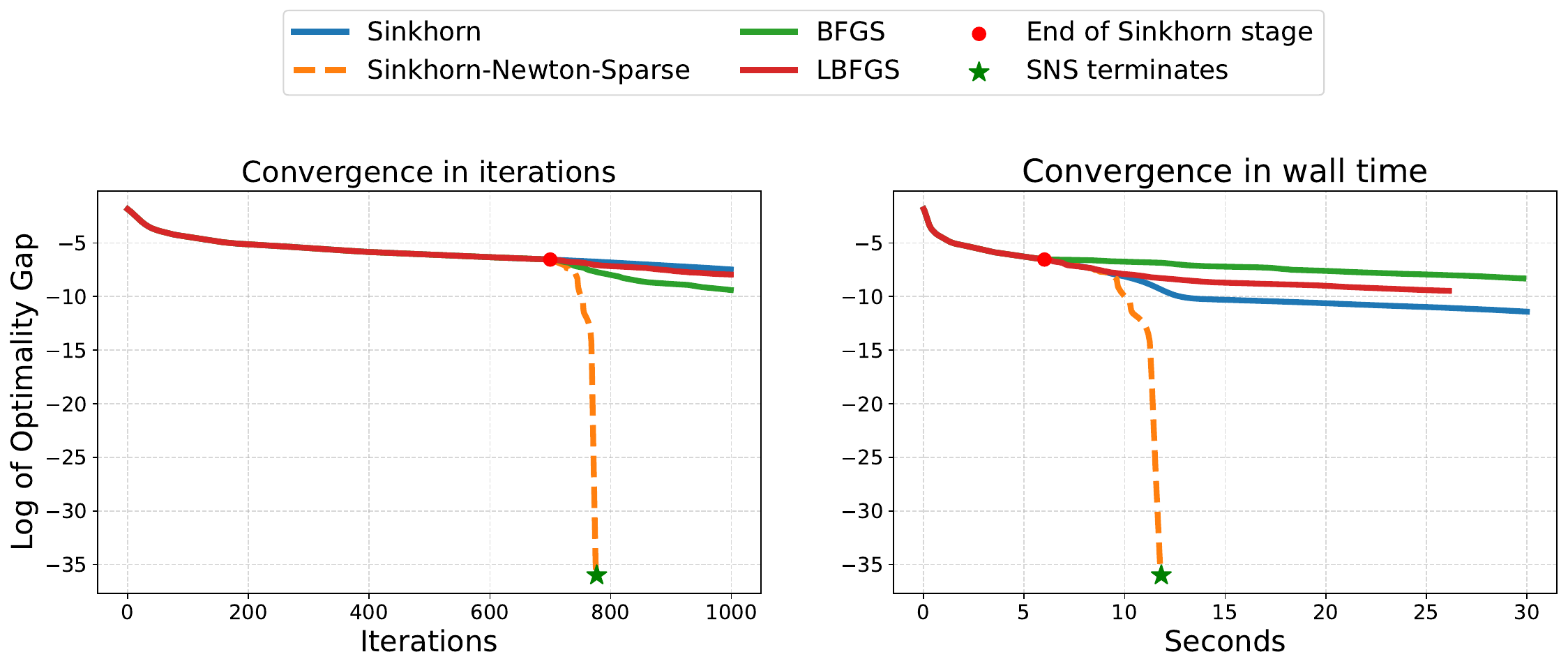}
        \caption{Optimal transport on the MNIST dataset under transportation cost \(\lVert x-y \rVert _1\)}
        \label{fig:my_label6}
    \end{subfigure}
    \caption{Performance of Quasi-Newton methods, compared against the Sinkhorn-Newton-Sparse algorithm and the Sinkhorn algorithm.}
    \label{fig:quasi-newton}
\end{figure}

\section{Sinkhorn-Newton-Sparse under different entropy regularization parameter}\label{sec : choice of eta}
In this section, we show an acceleration of SNS over the Sinkhorn algorithm under a wider range for the entropy regularization parameter \(\eta\). In particular, we focus on the setting of MNIST image under \(l_1\) and \(l_2\) costs. Importantly, it is common practice to pixel distance is used to form the cost matrix. For example, the Earth-Mover distance (EMD) considered in \citep{altschuler2017near} is defined by 
\[
d_{\mathrm{pixel}}\left((i,j), (i',j')\right) := \left| i-i'\right| + \left| j -j'\right|.
\]

In our work, a pixel \((i, j)\) is embedded to the point \((i/28, j/28) \in \mathbb{R}^2\) before the distance function is applied. Thus, this text uses the normalized distance function
\[
d\left((i,j), (i', j')\right) := \left| \frac{i - i'}{28} \right| + \left|\frac{j - j'}{28} \right|.
\]
As \(d = \frac{1}{28}d_{\mathrm{pixel}}\), our choice of \(\eta = 1200\) in Section \ref{sec: expts} is equivalent to choosing \(\eta = 1200/28 \approx 42\). As the range used in \citet{altschuler2017near} is \(\eta \in [1, 9]\), our \(\eta\) is similar to the range of entropy regularization commonly used. To show the performance of SNS is robust under different \(\eta\), we benchmark the performance of SNS under an extended practical choice of $\eta=28k$ for $k=\{1,3,5,7,9,11\}$. For the Sinkhorn stage warm initialization, we take \(N_1 = 10k + 100\) for each \(\eta = 28k\), and the target sparsity is taken to be \(\lambda = 15/n\). In Table \ref{tab:runtime_iteration_comparison_eta_sweep_w1}, we show the performance of SNS compared with the Sinkhorn algorithm to reach machine accuracy. One can see that the SNS algorithm consistently outperforms the Sinkhorn algorithm, and the improvement is more significant under larger choices of \(\eta\). 

For further validation, in Figure \ref{fig:w1_cost_vs_eta} we plot the Wasserstein \(W_1\) transport distance for the entropy regularized optimal solution \(P_{\eta}^{\star}\) under different \(\eta\), which shows indeed that \(\eta \approx 150\) is sufficient for the practical goal of obtaining transport plan with relatively low transport cost. 

For the case of squared \(l_2\) distance, as the squared \(l_2\) distance is scaled by a factor of \(576\), we benchmark the performance of SNS under $\eta = 576k$ for $k=\{1,3,5,7,9,11\}$. As can be seen in Figure \ref{fig:w2_cost_vs_eta}, one needs \(\eta \approx 1000\) to reach within \(1 \%\) accuracy of the ground-truth transport cost, which is why the range of \(\eta\) considered is a reasonable choice. 
For the Sinkhorn stage warm initialization, we take \(N_1 = 10k\) for each \(\eta=576k\), and the target sparsity is taken to be \(\lambda = 4/n\). 
In Table \ref{tab:runtime_iteration_comparison_eta_sweep}, we show the performance of SNS compared with the Sinkhorn algorithm to reach machine accuracy. One can see that the SNS algorithm consistently outperforms the Sinkhorn algorithm, and the improvement is more significant under larger choices of 
\(\eta\).
Moreover, for the case of \(\eta = 576\), even though the presence of entropy regularization is strong, the numerical result shows that \(\lambda = 4/n\) in the sparse Hessian approximation is sufficient to reach machine accuracy rapidly. 

\begin{table}[t]
    \renewcommand{\arraystretch}{1.1}
    \centering
    \caption{Performance comparison between SNS and Sinkhorn for different \(\eta\) under the transportation cost \(\lVert x-y \rVert _1\). Both algorithms are run until they reach machine accuracy. The time and iteration of the SNS method refers to the combined time and iteration of the two stages combined.}
    \begin{tabular}{c|c|S[table-format=3.2]S[table-format=5.0]}
        \toprule
        \textbf{Entropy} & \textbf{Method} & \textbf{Time (s)} & \textbf{Iterations} \\
        \midrule
        \multirow{2}{*}{\(\eta = 28\)} & SNS & 1.45 & 110 \\
        & Sinkhorn & 1.96 & 173 \\
        \midrule
        \multirow{2}{*}{\(\eta = 84\)} & SNS & 5.32 & 147 \\
        & Sinkhorn & 9.63 & 899 \\
        \midrule
        \multirow{2}{*}{\(\eta = 140\)} & SNS & 5.41 & 167 \\
        & Sinkhorn & 14.53 & 1399 \\
        \midrule
        \multirow{2}{*}{\(\eta = 196\)} & SNS & 6.10 & 189 \\
        & Sinkhorn & 15.56 & 1499 \\
        \midrule
        \multirow{2}{*}{\(\eta = 252\)} & SNS & 7.78 & 216 \\
        & Sinkhorn & 17.81 & 1699 \\
        \midrule
        \multirow{2}{*}{\(\eta = 308\)} & SNS & 8.08 & 236 \\
        & Sinkhorn & 19.76 & 1899 \\
        \bottomrule
    \end{tabular}
    \label{tab:runtime_iteration_comparison_eta_sweep_w1}
\end{table}

\begin{table}[t]
    \renewcommand{\arraystretch}{1.1}
    \centering
    \caption{Performance comparison between SNS and Sinkhorn for different \(\eta\) under the transportation cost \(\lVert x-y \rVert _2^2\). Both algorithms are run until they reach machine accuracy. The time and iteration of the SNS method refers to the combined time and iteration of the two stages combined.}
    \begin{tabular}{c|c|S[table-format=3.2]S[table-format=5.0]}
        \toprule
        \textbf{Entropy} & \textbf{Method} & \textbf{Time (s)} & \textbf{Iterations} \\
        \midrule
        \multirow{2}{*}{\(\eta = 576\)} & SNS & 3.40 & 33 \\
        & Sinkhorn & 9.95 & 946 \\
        \midrule
        \multirow{2}{*}{\(\eta = 1728\)} & SNS & 5.10 & 64 \\
        & Sinkhorn & 32.92 & 3072 \\
        \midrule
        \multirow{2}{*}{\(\eta = 2880\)} & SNS & 7.08 & 96 \\
        & Sinkhorn & 62.96 & 6083 \\
        \midrule
        \multirow{2}{*}{\(\eta = 4032\)} & SNS & 9.70 & 134 \\
        & Sinkhorn & 108.24 & 10596 \\
        \midrule
        \multirow{2}{*}{\(\eta = 5184\)} & SNS & 12.83 & 177 \\
        & Sinkhorn & 166.97 & 16299 \\
        \midrule
        \multirow{2}{*}{\(\eta = 6336\)} & SNS & 22.30 & 259 \\
        & Sinkhorn & 248.43 & 23498 \\
        \bottomrule
    \end{tabular}
    \label{tab:runtime_iteration_comparison_eta_sweep}
\end{table}
\begin{figure}
    \centering
    \begin{subfigure}[b]{\textwidth}
        \centering
        \includegraphics[width=.85\textwidth]{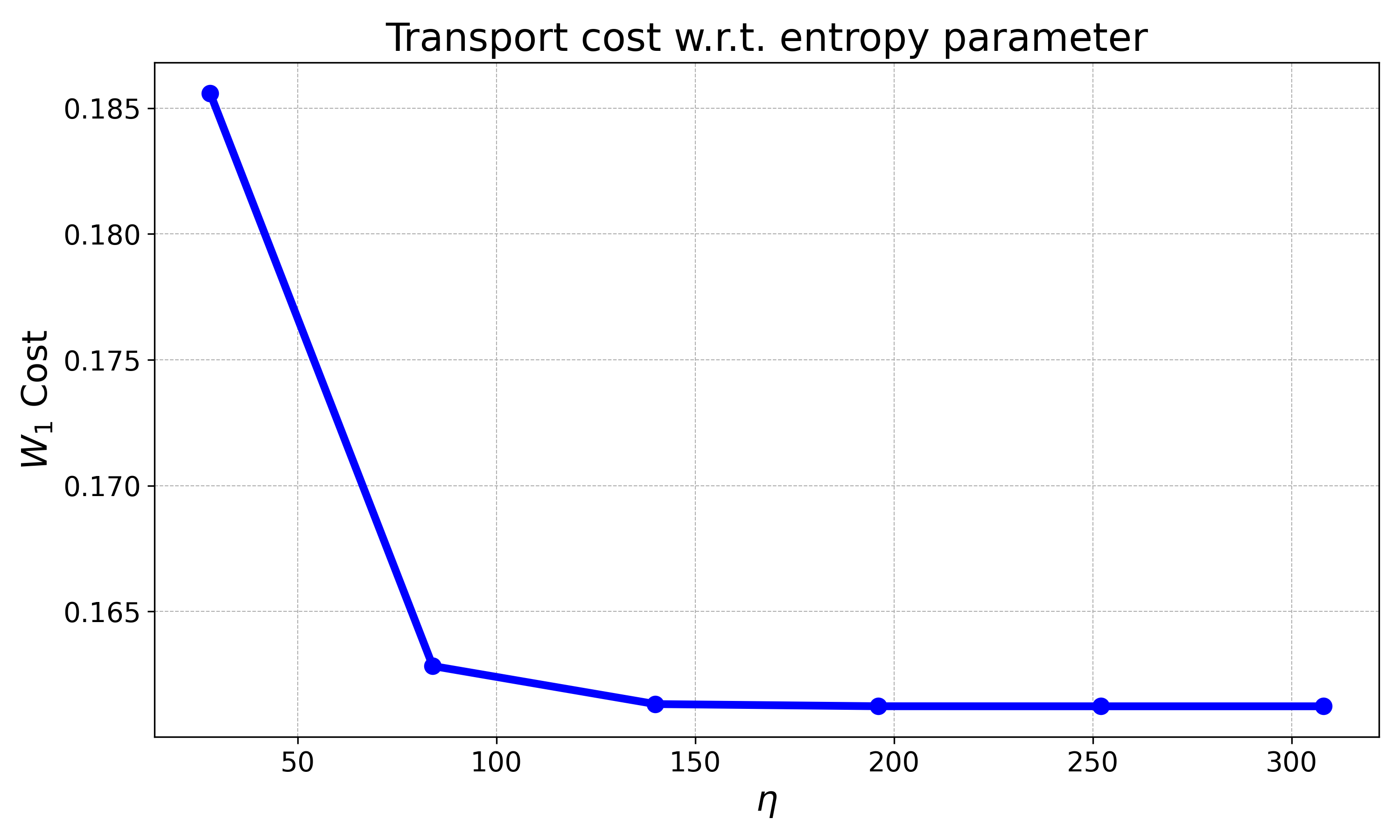}
        \caption{MNIST dataset under transportation cost \(\lVert x-y \rVert _1\)}
        \label{fig:w1_cost_vs_eta}
    \end{subfigure}
    \begin{subfigure}[b]{\textwidth}
        \centering
        \includegraphics[width=.85\textwidth]{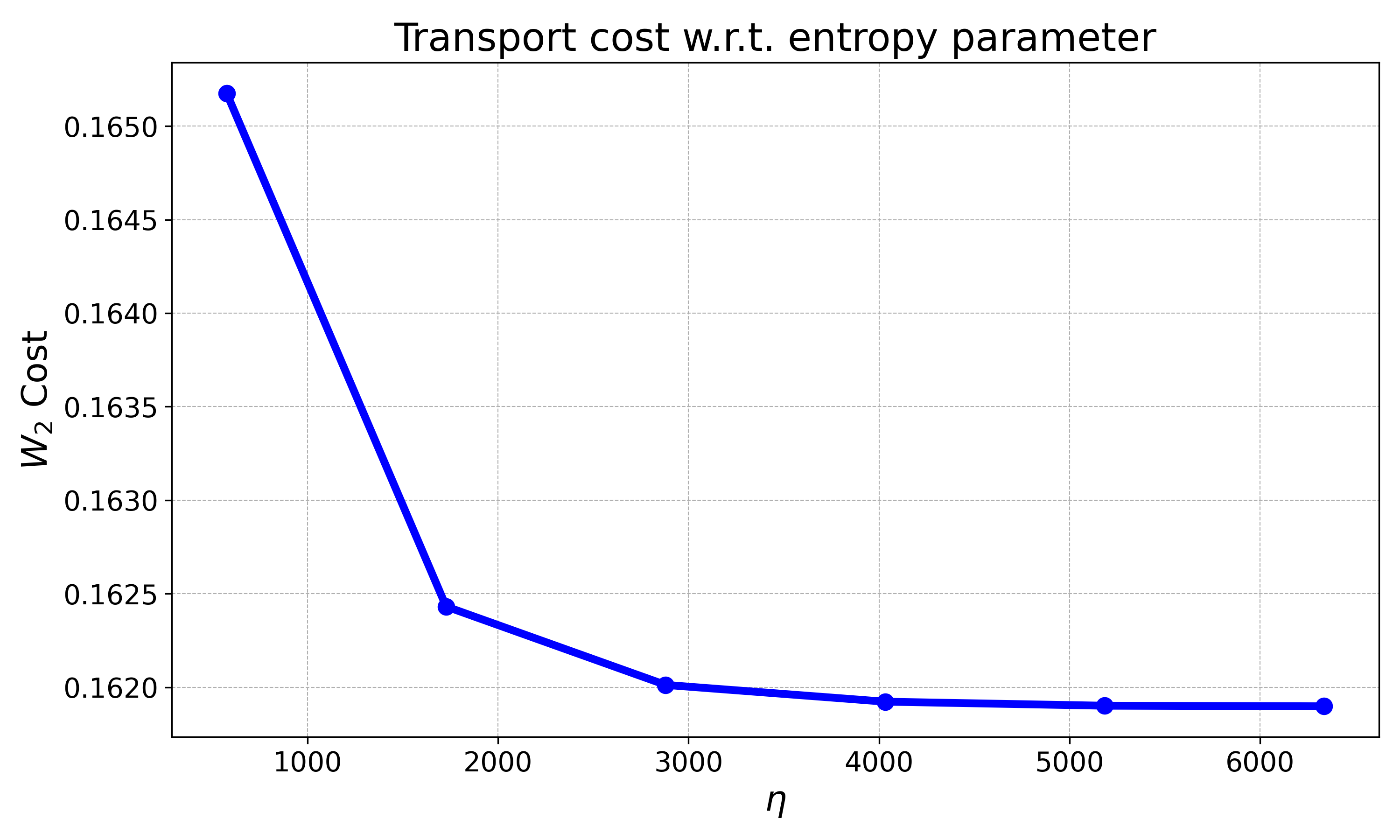}
        \caption{MNIST dataset under transportation cost \(\lVert x-y \rVert _2^2\)}
        \label{fig:w2_cost_vs_eta}
    \end{subfigure}
    \caption{Optimal transport cost of the obtained entropic regularized solution for different \(\eta\).}
    \label{fig:w2_cost_vs_eta}
\end{figure}

\end{appendix}
\end{document}